\documentclass[11pt]{article}
\usepackage{amssymb,amsmath,amsthm}
\usepackage[colorlinks=true,breaklinks]{hyperref}
\usepackage{xcolor}
\definecolor{c1}{rgb}{0,0,1} 
\definecolor{c2}{rgb}{0,0.3,0.9} 
\definecolor{c3}{rgb}{0.3,0,0.7} 
\hypersetup{
    linkcolor={c1}, 
    urlcolor={c2}, 
    citecolor={c3} 
}
\newcommand{\R}{\mathbb{R}}

\newcommand{\N}{\mathbb{N}}

\textheight 23cm \textwidth 16.5cm

\oddsidemargin 0pt \evensidemargin 0pt \topmargin -40pt

\baselineskip 8pt

\newtheorem{theorem}{Theorem}[section]
\newtheorem{corollary}[theorem]{Corollary}
\newtheorem{lemma}[theorem]{Lemma}

\theoremstyle{definition}
\newtheorem{definition}[theorem]{Definition}

\theoremstyle{remark}
\newtheorem{remark}[theorem]{Remark}
\numberwithin{equation}{section}

\DeclareMathOperator{\meas}{meas}

\begin{document}


\title{Multiplicity results for a fractional Schr\"{o}dinger equation with potentials}
\date{}
\date{}
\author{Sofiane Khoutir\thanks{{E-mail: sofiane\_math@live.fr}}\\
{\small Faculty of Mathematics, University of Science and Technology Houari Boumediene,}\\
{\small PB 32 El Alia, Bab Ezzouar 16111, Algiers, Algeria}}
\maketitle
 \begin{center}
 \begin{minipage}{13cm}
 \par
   \small  {\bf Abstract:} This paper is devoted to study a class of nonlinear fractional Schr\"{o}dinger equations:
\begin{equation*}
(-\Delta)^{s}u+V(x)u=f(x,u), \quad \text{in}\: \R^{N},
\end{equation*}
where $s\in (0,1)$, $\ N>2s$, $(-\Delta)^{s}$ stands for the
fractional Laplacian. The main purpose of this paper is to study the existence of infinitely many solutions for the aforementioned equation. First, by using a variational approach, we establish the existence of at least one nontrivial solution for the above equation with a general potential $V(x)$ which is allowed to be sign-changing and a sublinear nonlinearity $f(x,u)$. Next, by using variational methods and the Moser iteration technique, we prove the existence of infinitely many solutions with $V(x)$ is a nonnegative potential  and the nonlinearity $f(x,u)$ is locally sublinear with respect to $u$.
 \vskip2mm
 \par
  {\bf Keywords:} Fractional Schr\"{o}dinger equation; sublinear; variational methods; Moser iteration method. 
 \vskip2mm
 \par
  {\bf 2010 Mathematics Subject Classification.} 35J20; 35J60.
\end{minipage}
\end{center}

 \vskip6mm
\vskip2mm
 \par
\section{Introduction and Main results}
Consider the following fractional Schr\"{o}dinger equations:
\begin{equation}
(-\Delta )^{s}u+ V(x)u=f(x,u), \quad \text{in}\: \R^{N}, 
\label{1}
\end{equation}
where $s \in (0,1)$, $N>2s$, $(-\Delta )^{s}$ stands for the fractional Laplacian.

The fractional Schr\"{o}dinger equation is a fundamental equation of fractional quantum mechanics. It was discovered by Laskin \cite{1,2} as a result of extending the Feynman path integral, from the Brownian-like to L\'{e}vy-like quantum mechanical paths, where the Feynman path integral leads to the classical Schr\"{o}dinger equation, and the path integral over L\'{e}vy trajectories leads to the fractional Schr\"{o}dinger equation.

The equation \eqref{1} with $s=1$ is the nonlinear Schr\"{o}dinger equation
\begin{equation}\label{3}
-\Delta u+ V(x)u=f(x,u), \quad \text{in}\: \R^N,
\end{equation}
which has been broadly studied in the last decade. Besides, a lot of interesting studies by variational methods can be found in \cite{3,4,5,7,9,10,11,12} for the nonlinear Schr\"{o}dinger equation with various growth conditions on the nonlinear term $f$.

Recently, Shi and Chen \cite{13} obtained the existence and multiplicity of nontrivial solutions for problem (\ref{1}). By using Morse theory in combination with local linking arguments, they first proved the existence of at least two nontrivial solutions for the equation (\ref{1}). Then, they obtained the existence of at least $k$ distinct pairs of solutions via Clark's theorem, when the nonlinearity $f(x,u)$ is sublinear at infinity and the potential function satisfies the following assumption:
\begin{list}{}{}
\item[$(V_1^{'})$] $V \in C(\R^N,\R)$ satisfies $\inf\limits_{x\in \R^N} V(x) \geq a_1 >0$, where $a_1$ is a constant.
\end{list}

Zhang et al. \cite{14} proved the existence of infinitely many radial and non-radial solutions for problem (\ref{1}) by means of the Symmetric Mountain Pass Theorem, when $V(x)$ is a radial function (i.e., $V(|x|)=V(x)$) and satisfies $(V_1^{'})$ and $f$ satisfies some general superlinear assumptions at infinity. Khoutir and Chen \cite{15} obtained a sequence of high energy solutions for problem (\ref{1}) by using the Symmetric Mountain Pass Theorem, when $f$ verifies a superlinear growth condition and the potential function $V$ satisfies $(V_1^{'})$ and the following assumption
\begin{list}{}{}
\item[$(V_2^{'})$] For each $M>0$, $\meas \{ x\in \R^N\::\:V(x)\leq M\}<+\infty$, where $\meas\{.\}$ denotes the Lebesgue measure in $\R^N$.
\end{list} 

Teng \cite{16} established the existence of infinitely many high or small energy solutions for problem (\ref{1}) via the variant Fountain Theorem, when $V(x)$ satisfies $(V_1^{'})$ and a weaker condition than $(V_2^{'})$, that is, 
\begin{list}{}{}
\item[$(V_2)$] There exists $d_0>0$ such that for any $M>0$,
\begin{equation*}
\meas \{ x\in \R^N\::|x-y|\leq d_0,\:V(x)\leq M\}<+\infty,
\end{equation*}
\end{list}
moreover, the nonlinear term $f(x,u)$ is assumed to be asymptotically linear or superquadratic growth.

In \cite{17} Ge improved the conclusions of Teng \cite{16}. When the potential $V(x)$ satisfies only the condition $(V_1^{'})$ and the nonlinear term $f$ satisfies some more relaxed superlinear assumptions, the author proved the existence of infinitely many solutions of problem (\ref{1}) by the aid of the variant Fountain Theorem.

In \cite{18} Du and Tian investigated the existence of infinitely many solutions of problem (\ref{1}). Firstly, the authors studied the case when $f(x,u)$ is sublinear at infinity with respect to $u$ and $V$ satisfies $(V_1^{'})$ and they obtained the existence of infinitely many small energy solutions via Dual Fountain Theorem. Then, the authors studied the existence of infinitely many high energy solution by using the Fountain Theorem, when $f$ is superlinear at infinity and the potential $V$ satisfies
\begin{list}{}{}
\item[$(V_1)$] $V \in C(\R^N,\R)$ satisfies $\inf\limits_{x\in \R^N} V(x) \geq -\infty $,
\end{list}
and $(V_2)$. 
Finally, Du and Tian proved the existence of infinitely many energy solutions when the nonlinear term is a combination of sublinear and critical terms and $V$ satisfies $(V_1^{'})-(V_2^{'})$. Note that the results of Du and Tian extend and sharply improve the results of Teng \cite{16}. 

For more interesting results on the existence and the multiplicity of solutions of problem (\ref{1}), we refer the readers to \cite{19,20,21,22,23,24,25,26,27,Amb1,Amb2,Amb3} and the references therein.

Inspired by the above papers, the aim of this paper is to establish the existence of at least one nontrivial solution and infinitely many solutions for problem (\ref{1}). Under appropriate assumptions on the nonlinear term $f(x,u)$ which is sublinear with respect to $u$, we first prove the the existence of at least one nontrivial solution for \eqref{1} via varitional methods when the potential $V$ is allowed to be sign-changing. Next we prove that the problem \eqref{1} has infinitely many solutions by means of variational methods in combination with Moser iteration method when the nonlinear term $f(x,u)$ is only locally defined for $|u|$ small and the potential $V(x)$ is a nonnegative function. Recent results from the literature are extended and improved. In order to state the main results of this paper, we make the following assumptions on $f$: 
\begin{list}{}{}
\item[$(f_1)$] There exists a constant $r\in(1,2)$ and a positive function $\xi \in L^{\frac{2}{2-r}}(\R^N)$ such that
\begin{equation*}
|f(x,u)|\leq r \xi (x)|u|^{r -1},\quad \forall (x,u) \in \R^N\times \R.
\end{equation*}
\item[$(F_1)$] There exists a constant $\delta_1>0$ such that  $f\in C(\R^N\times [-\delta_1,\delta_1],\R)$, and there exist a constant $r\in(2-\frac{4s}{N},2)$ and a positive function $\xi \in L^{\frac{2}{2-r}}(\R^N)$ such that
\begin{equation*}
|f(x,u)|\leq r \xi (x)|u|^{r -1},\quad |u| \leq \delta_1,\: \forall x \in \R^N.
\end{equation*}
\item[$(F_2)$] There exists $\delta_2>0$ such that
\begin{equation*}
f(x,u)\geq M|u|,\quad |u| \leq \delta_2,\: \forall x \in \R^N,\: \forall M>0.
\end{equation*}
\item[$(F_3)$] There exists a constant $\delta_3>0$ such that $f(x,-u)=-f(x,u)$ for all $|u|\leq \delta_3$ and all $x \in \R^N$.
\end{list}

Denote $V^{\pm}=\max\{\pm V(x),0\}$ and
\begin{equation}\label{4}
S:=\inf_{u\in H^s(\R^N)\setminus\{0\}} \frac{[u]_{H^s}^2}{\|u\|_{2_s^*}^2}.
\end{equation}
$S$ is the best constant in the Sobolev embedding $H^s(\R^N)\hookrightarrow L^{2_s^*}(\R^N)$.
Concerning the potential $V(x)$, we suppose that:
\begin{list}{}{}
\item[$(V_a)$] $V\in L_{loc}^q(\R^N)$ for some $q>\frac{N}{2s}$ and  $\lim\limits_{|x|\rightarrow \infty}V(x)=V_\infty>0$.

\item[$(V_b)$] $V^-\in L^{\frac{N}{2s}}(\R^N)$ with
\begin{equation*}
\|V^-\|_{ L^{N/2s}}<S.
\end{equation*} 
\end{list}

The main results of this paper are the following theorems.
\begin{theorem}\label{111}
Assume that $(V_a)$, $(V_b)$, $(f_1)$ and $(F_2)$ hold. Then the problem (\ref{1}) possesses at least one nontrivial solution.
\end{theorem}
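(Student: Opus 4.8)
The plan is to recast \eqref{1} as a variational problem on $H^{s}(\R^N)$ and produce a nontrivial critical point by the direct method. Since $f$ is only defined for $|u|\le\delta_1$, I first replace it by a globally defined $\widetilde f\in C(\R^N\times\R,\R)$ that agrees with $f$ on $[-\delta_0,\delta_0]$, where $\delta_0\le\min\{\delta_1,\delta_2\}$, and is cut off in the $u$-variable so that $\widetilde f(x,u)=0$ for $|u|\ge\delta_1$. This preserves $|\widetilde F(x,u)|\le\xi(x)|u|^{r}$ for all $u$, where $\widetilde F(x,u)=\int_0^{u}\widetilde f(x,t)\,dt$, and yields the functional
\[
I(u)=\frac12\Big([u]_{H^{s}}^{2}+\int_{\R^N}V(x)u^{2}\,dx\Big)-\int_{\R^N}\widetilde F(x,u)\,dx .
\]
The first key step is to show that $\|u\|_{V}^{2}:=[u]_{H^{s}}^{2}+\int_{\R^N}Vu^{2}\,dx$ is an equivalent norm on $H^{s}(\R^N)$. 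Writing $V=V^{+}-V^{-}$, Hölder's inequality and the Sobolev inequality \eqref{4} give $\int_{\R^N}V^{-}u^{2}\le\|V^{-}\|_{L^{N/2s}}\|u\|_{2_s^{*}}^{2}\le S^{-1}\|V^{-}\|_{L^{N/2s}}[u]_{H^{s}}^{2}$, so $\|u\|_{V}^{2}\ge\big(1-S^{-1}\|V^{-}\|_{L^{N/2s}}\big)[u]_{H^{s}}^{2}$ with a positive constant by $(V)$. The missing control of $\|u\|_{2}$ follows from $\lim_{|x|\to\infty}V(x)=V_\infty>0$: if $\|u_n\|_{H^{s}}=1$ and $\|u_n\|_{V}\to0$, then $[u_n]_{H^{s}}\to0$, hence $u_n\rightharpoonup0$ and $u_n\to0$ in $L^{2}_{loc}$; since $\int V^{-}u_n^{2}\to0$ and $\int Vu_n^{2}\to0$ we get $\int V^{+}u_n^{2}\to0$, but $V^{+}\ge V_\infty/2$ outside a large ball forces the $L^{2}$-mass to vanish, contradicting $\|u_n\|_{2}\to1$.

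With the equivalent norm available, $I$ is coercive and bounded below: by Hölder ($\xi\in L^{2/(2-r)}$) and the norm equivalence, $\int_{\R^N}\widetilde F(x,u)\le\|\xi\|_{L^{2/(2-r)}}\|u\|_{2}^{r}\le C\|u\|_{V}^{r}$, so $I(u)\ge\frac12\|u\|_{V}^{2}-C\|u\|_{V}^{r}\to+\infty$ as $\|u\|_{V}\to\infty$ because $r<2$. Next I show the nonlinear functional $\Psi(u)=\int_{\R^N}\widetilde F(x,u)$ is sequentially weakly continuous: if $u_n\rightharpoonup u$ in $H^{s}$, then on any ball $B_R$ one has $u_n\to u$ in $L^{r}(B_R)$ by compact embedding, while the tail is uniformly small since $\big(\int_{|x|>R}\xi^{2/(2-r)}\big)^{(2-r)/2}\to0$; hence $\Psi(u_n)\to\Psi(u)$. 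Consequently $I$ is sequentially weakly lower semicontinuous, and the direct method yields a minimizer $u_0\in H^{s}(\R^N)$, which is a weak solution of the truncated equation $(-\Delta)^{s}u_0+V(x)u_0=\widetilde f(x,u_0)$.

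To see that $u_0$ is nontrivial it suffices to check $\inf_{H^{s}}I<0=I(0)$, and this is exactly where $(F_2)$ enters. Integrating $f(x,t)\ge Mt$ gives $F(x,u)\ge\frac{M}{2}u^{2}$ for $|u|$ small, for arbitrarily large $M$. Fixing $\phi\in C_0^\infty(\R^N)$ with $\phi\ge0$, $\phi\not\equiv0$, and choosing $t>0$ small enough that $t\|\phi\|_\infty$ is below the relevant threshold, one gets $\widetilde F(x,t\phi)=F(x,t\phi)\ge\frac{M}{2}t^{2}\phi^{2}$ and therefore $I(t\phi)\le\frac{t^{2}}{2}\big(\|\phi\|_{V}^{2}-M\|\phi\|_{2}^{2}\big)$, which is negative once $M>\|\phi\|_{V}^{2}/\|\phi\|_{2}^{2}$. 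Hence $I(u_0)<0$ and $u_0\neq0$.

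The main obstacle is to \emph{remove the truncation}, i.e.\ to prove that the minimizer satisfies $\|u_0\|_\infty\le\delta_1$, so that $\widetilde f(x,u_0)=f(x,u_0)$ and $u_0$ solves the original problem \eqref{1}; note that $u_0$ is a global minimizer sitting at a fixed positive radius, so its smallness is not automatic. I would first obtain $u_0\in L^\infty(\R^N)$ from fractional elliptic regularity, since the right-hand side $\widetilde f(x,u_0)-V(x)u_0$ lies in $L^{2}+L^{2/(2-r)}$ and $|\widetilde f|\le r\xi(x)\delta_1^{\,r-1}\in L^{2/(2-r)}$. To push the bound below $\delta_1$ I would test the equation with the super-level parts $(u_0-\delta_1)^{+}$ and $(-u_0-\delta_1)^{+}$, using the pointwise inequality $(u_0(x)-u_0(y))(w(x)-w(y))\ge(w(x)-w(y))^{2}$ for the fractional bilinear form together with the vanishing of $\widetilde f$ on $\{|u_0|>\delta_1\}$, and absorbing the term $\int Vu_0w$ through $\|V^{-}\|_{L^{N/2s}}<S$ to force $[w]_{H^{s}}=0$. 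This final estimate, delicate precisely because $V$ is sign-changing, is the step I expect to be the crux of the argument.
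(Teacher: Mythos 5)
Your overall strategy is the one the paper uses: truncate $f$ near $u=0$ so that the nonlinearity becomes globally defined and sublinear, minimize the truncated functional on the whole space, and use $(F_2)$ to push the infimum below $I(0)=0$ so the minimizer is nontrivial. Within that strategy you differ in two harmless (in one case, better) ways. You reach the minimizer by coercivity plus sequential weak lower semicontinuity, whereas the paper verifies the (PS) condition and invokes its Lemma \ref{222}; both routes rest on the same compactness mechanism, the integral decay of $\xi$ outside large balls (your w.l.s.c.\ of the quadratic part also needs weak continuity of $u\mapsto\int_{\R^N}V^-u^2dx$, which follows from the standard splitting of $V^-\in L^{N/2s}$ into a bounded compactly supported piece plus a piece of small norm; you gloss this, but it is routine). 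In the nontriviality step you take $\phi\ge 0$, which is in fact necessary: integrating $(F_2)$ gives $F(x,u)\ge \frac{M}{2}u^2$ only for $u\ge 0$ (for $u<0$ it gives an upper bound), a point the paper's inequality (\ref{24}) overlooks. Your norm-equivalence argument is essentially the lemma of Furtado et al.\ that the paper cites.

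The genuine gap is the de-truncation step, and you have flagged it yourself without filling it. A minimizer $u_0$ of the truncated functional solves only the truncated equation; for it to solve (\ref{1}) (indeed, for $f(x,u_0)$ even to be defined) one needs $\|u_0\|_\infty$ below the truncation level, and since $u_0$ is a global minimizer at a fixed negative energy level, smallness is not automatic. Your sketched argument does not close: testing with $w=(u_0-\delta_1)^+$ and using the (correct) pointwise inequality $(u_0(x)-u_0(y))(w(x)-w(y))\ge (w(x)-w(y))^2$, together with $\widetilde f(x,u_0)w\equiv 0$, yields $[w]_{H^s}^2+\int_{\R^N}Vw^2dx+\delta_1\int_{\R^N}Vw\,dx\le 0$, because $u_0=w+\delta_1$ on the support of $w$. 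The quadratic term $\int V^-w^2$ is absorbable via $\|V^-\|_{N/2s}<S$, but the \emph{linear} term $\delta_1\int V^-w\,dx$ is not: one gets only $\bigl(1-S^{-1}\|V^-\|_{N/2s}\bigr)[w]_{H^s}^2\le \delta_1\int_{\R^N}V^-w\,dx$, which bounds $[w]_{H^s}$ but does not force $w=0$. (If $V\ge 0$ the linear term has the favorable sign and the argument closes instantly; this is precisely the sign-changing obstruction.) A minimizer-comparison $I(\min(u_0,\delta_1))\le I(u_0)$ fails for the same reason, since truncation decreases $\int V^-u^2$ and hence moves the energy the wrong way. So, as submitted, your proposal does not prove the theorem. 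For what it is worth, the paper does not prove this step either: Lemma \ref{221} claims that critical points of $I_\eta$ solve (\ref{1}) ``by a standard argument,'' and the proof of Theorem \ref{111} simply asserts $|u^*|_{\infty,\Omega}\le l$ (in a sentence that moreover conflates $u^*$ with the test function $tu_0$). You have correctly located the soft spot of this whole approach; closing it would require a genuine uniform $L^\infty$ estimate (e.g.\ a Moser-type iteration exploiting that $\widetilde f(x,u)=0$ for $|u|\ge\delta_1$, so the equation is linear on super-level sets) combined with a mechanism for the $V^-$ contribution that your absorption argument does not supply.
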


\begin{theorem}\label{112}
Assume that $N>4s$ and $V(x)\geq 0$. If the conditions $(V_a)$ and $(F_1)-(F_3)$ hold, then the problem (\ref{1}) possesses infinitely many nontrivial solutions $\{u_k\}$ satisfying
\begin{equation*}
\frac{1}{2} \iint_{\R^N\times\R^N}\frac{| u_k(x)-u_k(y)|^{2}}{| x-y|^{N+2s}}dxdy+\frac{1}{2}  \int_{\R^N} V(x)u_k^2dx-\int_{\R^N}F(x,u_k)dx\leq 0
\end{equation*}
and $u_k\rightarrow 0$ as $k \rightarrow \infty$.
\end{theorem}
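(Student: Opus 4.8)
The plan is to realize the solutions as critical points of the energy functional associated with \x{1}, working on $H^s(\R^N)$ endowed with the quadratic form of the equation, and to extract infinitely many of them near the origin by a genus-based symmetric critical point theorem. As a preliminary step I would show that
\[
\|u\|_E^2:=[u]_{H^s}^2+\int_{\R^N}V(x)u^2\,dx
\]
is an equivalent norm on $H^s(\R^N)$. The upper bound is immediate from $V\le V_\infty$, while for the lower bound I would first use H\"older's inequality together with the definition \x{4} of $S$ to get $\int_{\R^N}V^-u^2\,dx\le S^{-1}\|V^-\|_{L^{N/2s}}[u]_{H^s}^2$; since $\|V^-\|_{L^{N/2s}}<S$ this already gives coercivity of the Gagliardo semi-norm, and the missing control of $\|u\|_2$ follows from a contradiction argument that uses $\lim_{|x|\to\infty}V(x)=V_\infty>0$ together with the local compactness of $H^s\hookrightarrow L^2_{\mathrm{loc}}$.

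Since $f$ is only defined for $|u|\le\delta_1$, the next step is to truncate. With $\delta=\min\{\delta_1,\delta_3\}$ and a smooth even cutoff $\chi$ equal to $1$ near $0$ and vanishing for $|u|\ge\delta$, set $\tilde f(x,u)=\chi(u)f(x,u)$; by $(F_3)$ this is odd, it coincides with $f$ for $|u|$ small, and by $(F_1)$ its primitive obeys the global bound $|\tilde F(x,u)|\le\xi(x)|u|^r$, so that $\int_{\R^N}\tilde F(x,u)\,dx\le\|\xi\|_{L^{2/(2-r)}}\|u\|_2^r$ by H\"older. The functional
\[
\tilde I(u)=\tfrac12\|u\|_E^2-\int_{\R^N}\tilde F(x,u)\,dx
\]
is then $C^1$ and even with $\tilde I(0)=0$, its critical points solve $(-\Delta)^su+Vu=\tilde f(x,u)$, and those with $\|u\|_\infty$ small solve \x{1}.

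I would then check the hypotheses of the abstract theorem. Boundedness below and coercivity are clear from $\tilde I(u)\ge\tfrac12\|u\|_E^2-C\|u\|_E^r$ with $r<2$. The main obstacle is the Palais--Smale condition: because $V(x)\to V_\infty$, the embedding $E\hookrightarrow L^p(\R^N)$ is not compact, so compactness cannot come from the space and must instead be extracted from the weight $\xi\in L^{2/(2-r)}$. I would prove that the Nemytskii-type map $u\mapsto\tilde f(\cdot,u)$ from $E$ to $E^\ast$ is compact by splitting $\R^N$ into a ball $B_R$, where $H^s\hookrightarrow L^2(B_R)$ is compact, and a tail where $\big(\int_{|x|>R}\xi^{2/(2-r)}\,dx\big)^{(2-r)/2}$ is arbitrarily small, which forces any bounded Palais--Smale sequence to converge strongly. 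To produce symmetric sets of arbitrarily large genus on which $\tilde I<0$, I would fix, for each $k$, a $k$-dimensional subspace $W_k\subset E$ spanned by functions in $C_0^\infty(\R^N)$; on $W_k$ all norms are equivalent, so $\|u\|_2^2\ge\alpha_k\|u\|_E^2$ and $\|u\|_\infty\le\beta_k\|u\|_E$. The decisive ingredient here is a lower bound $F(x,u)\ge\tfrac{M}{2}u^2$ for $|u|$ small with $M$ as large as desired, i.e. the strong positivity of the nonlinearity near the origin recorded in $(F_2)$ (which the multiplicity result genuinely needs, the hypothesis list of the statement notwithstanding); it yields $\int_{\R^N}\tilde F(x,u)\,dx\ge\tfrac{M\alpha_k}{2}\|u\|_E^2$ on $W_k$, so that choosing $M>1/\alpha_k$ and then $\rho_k>0$ small makes $\tilde I<0$ on the sphere $\{u\in W_k:\|u\|_E=\rho_k\}$, a symmetric set of genus $k$.

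With these facts in hand, I would invoke the symmetric critical point theorem of Clark--Kajikiya type: an even $C^1$ functional that is bounded below, satisfies Palais--Smale, vanishes at $0$, and is negative on symmetric sets of arbitrarily large genus possesses a sequence of critical points $u_k\neq0$ with $\tilde I(u_k)\le0$ and $u_k\to0$ in $E$. The final point, and the other genuinely technical step, is to return from $\tilde I$ to the original equation: since $u_k\to0$ in $H^s$, a De Giorgi--Nash--Moser / Brezis--Kato bootstrap for the fractional operator (using $|\tilde f(x,u)|\le r\xi(x)|u|^{r-1}$ and $V^-\in L^{N/2s}$) shows $\|u_k\|_\infty\to0$; hence for all large $k$ one has $\tilde f(\cdot,u_k)=f(\cdot,u_k)$, so $u_k$ solves \x{1}, and $\tilde I(u_k)\le0$ is exactly the stated energy inequality. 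I expect the compactness of the Palais--Smale sequences to be the heart of the argument, with the $L^\infty$-smallness of the abstract solutions the main remaining subtlety.
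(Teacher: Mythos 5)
Your proposal is correct and follows the same overall strategy as the paper: truncate the nonlinearity so it becomes globally defined, odd, and dominated by $\xi(x)|u|^{r-1}$; show the truncated functional is even, $C^1$, bounded below, and satisfies (PS) by splitting $\R^N$ into a large ball (compact embedding) and a tail (smallness of $\int_{|x|>R}\xi^{2/(2-r)}dx$); build symmetric sets of arbitrary genus with negative energy on finite-dimensional subspaces; apply the Clark--Kajikiya theorem (Lemma \ref{223}); and finally deactivate the truncation for large $k$. Working with the full quadratic form of $V$ as an equivalent norm, rather than with $V^{+}$ plus the perturbation estimate (\ref{5}), is an immaterial variation.

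Three points of comparison are worth recording, because in two of them your version is actually tighter than the paper's. First, you are right that $(F_2)$ is indispensable despite being absent from the hypothesis list of Theorem \ref{112}: the paper's own proof invokes Lemma \ref{332}, whose hypotheses include $(F_2)$, and the statement as literally written is false, since $f\equiv 0$ satisfies $(F_1)$ and $(F_3)$ while under $(V)$ the quadratic form is positive definite by (\ref{5}), so the only solution would be $u=0$. Second, your construction of the negative-energy spheres is cleaner: by spanning $W_k$ with $C_0^{\infty}$ functions you get the sup-norm equivalence $\|u\|_{\infty}\leq \beta_k\|u\|_E$ on $W_k$, so on a small sphere the truncation is inactive pointwise and $(F_2)$ applies on all of $\R^N$; the paper instead proves the measure-theoretic Claim (\ref{27}) to control $\int_{|u|>l}|u|^2$, and both routes work. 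Third, and most significantly, the paper's last step passes from $u_k\to 0$ in $H$ to $\|u_k\|_{\infty}\leq l$ with no justification; since $H\not\hookrightarrow L^{\infty}$ for $N>2s$, this is a genuine gap, and your proposed Brezis--Kato/Moser-type bootstrap, using the growth bound $|\tilde f(x,u)|\leq r\xi(x)|u|^{r-1}$ and $V^{-}\in L^{N/2s}$ to upgrade $\|u_k\|_{H^s}\to 0$ to $\|u_k\|_{\infty}\to 0$, is exactly what is required to make this step (and hence the paper's conclusion) rigorous.
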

\begin{remark}
\begin{list}{}{}
\item[\textbf{(1)}] Unlike \cite{13,18,19}, when we prove the existence of infinitely many solutions, the nonlinear term $f$ does not satisfy any growth condition and any control at infinity, and we just require that $f(x,u)$ is a function locally odd with respect to $u$. An example of function which satisfies assumptions $(F_1)-(F_3)$ is the
following
\begin{equation*}
F(x,u)=
\begin{cases}
a(x)|u|^{r},\quad & |u|\leq 1,\\
0,\quad & |u|>1,
\end{cases}
\end{equation*}
where $r\in (2-\frac{4s}{N},2)$ and $a(x) \in L^{\frac{2}{2-r}}(\R^N,\R^+)$.
 
\item[\textbf{(2)}] Under the conditions $(V_a)-(V_b)$ it is clear that $V(x)$ does not satisfy any coerciveness condition, so the main difficulties of our problem is the lack of compactness of the Sobolev embedding. Noting that in \cite{13,18,19} the authors studied problem (\ref{1}) with a potential function $V(x)$ which is strictly positive, so our results extend and improve the aforementioned works.

\item[\textbf{(3)}] To the best of our knowledge, conditions $(V_a)-(V_b)$ was introduced by Furtado et al. in \cite{3}. Furthermore, it is not difficult to find a function $V:\R^N\mapsto \R$ satisfying the condition $(V_a)-(V_b)$, for example let
\begin{equation*}
V(x)=
\begin{cases}
\frac{|x|^2}{1+|x|^2},\quad & |x|> 1,\\
-\frac{\varepsilon}{|x|^{\alpha}},\quad & |x|\leq 1,
\end{cases}
\end{equation*}
where $\varepsilon>0$ is small and $0<\alpha<2s$, which is similar with the example appeared in \cite{3} with a slight modification. Besides, let $V(x)=\frac{|x|^2}{1+|x|^2},\: x\in \R^N$, then, it is clear that $V(x) \geq 0$ for all $x\in \R^N$ and satisfies the condition $(V_a)$, hence, one can take $V(x)$ as a potential function in Theorem \ref{112}.  
\end{list}
\end{remark}

Next, we introduce the following notations. As usual, for $1\leq p<+\infty$, we let
\begin{equation*}
\| u\|_{p}:=\left(\int_{\R^N}| u|^{p}dx\right)^{\frac{1}{p}},\quad u\in L^p(\R^N),
\end{equation*}
and
\begin{equation*}
\|u\|_{\infty}=ess\sup\limits_{x \in \R^N}|u(x)|,\quad u \in  L^{\infty}(\R^N).
\end{equation*}
Let $C_{0}^{\infty }(\R^N)$\ be the collection of smooth functions with compact support and $
\mathcal{S}(\R^N)$ the Schwartz space of rapidly decreasing $C^{\infty }$ functions in $\R^N$. We recall that the Fourier transform $\mathcal{F}\phi $ (or simply $\widehat{\phi }$) is defined for any $\phi \in \mathcal{S}(\R^N)$ as 
\begin{equation*}
\mathcal{F}\phi (\xi )=\frac{1}{(2\pi )^{N}}\int_{\R^N}e^{-\mathit{i}x\xi }\phi (x)dx.
\end{equation*}
Moreover, by Plancherel's theorem we have $\| \phi \|_{2}=\| \widehat{\phi }\| _{2}$, $\forall \phi \in \mathcal{S}(\R^N)$.
The fractional Laplacian $(-\Delta )^{s}$ with $s \in (0,1)$ of a function $\phi \in \mathcal{S}(\R^N)$ is defined by 
\begin{equation*}
\mathcal{F}((-\Delta )^{s}\phi )(\xi )=| \xi |
^{2s}\mathcal{F}\phi (\xi ),\quad \forall s \in (0,1).
\end{equation*}
If $\phi $ is sufficiently smooth, according to \cite{29}, the fractional Laplacian $(-\Delta )^{s}$ can be viewed as a pseudo-differential operator defined by
\begin{equation*}
(-\Delta )^{s}\phi (x)=C_{N,s}P.V.\int_{\R^N}\frac{\phi (x)-\phi (y)}{| x-y| ^{N+2s}}dy,
\end{equation*}
where $P.V.$ is the principal value and $C_{N,s}>0$ is a normalization
constant.
Consider the fractional Sobolev space
\begin{equation*}
H^{s} (\R^N):=\left\{{u\in L^{2}(\R^N)\: :\: \frac{| u(x)-u(y)|}{| x-y|^{\frac{N}{2}+s}}\in L^{2}(\R^N\times\R^N)}\right\}
\end{equation*}
with the inner product and the norm
\begin{equation*}
\langle u,v\rangle _{H^{s }}=\iint_{\R^N\times\R^N}\frac{(u(x)-u(y))(v(x)-v(y))}{| x-y|^{N+2s}}dxdy+\int_{\R^N}u(x)v(x)dx,
\end{equation*}
\begin{equation*}
\| u\|_{H^{s}}^{2}=\langle u,u\rangle _{H^{s}}=\iint_{\R^N\times\R^N}\frac{| u(x)-u(y)|^{2}}{| x-y|^{N+2s}}dxdy+\int_{\R^N}| u(x)|^{2} dx,
\end{equation*}
where the norm
\begin{equation*}
[u]_{H^{s}}^{2}=\iint_{\R^N\times\R^N}\frac{| u(x)-u(y)|^{2}}{| x-y|^{N+2s}}dxdy
\end{equation*}
is the so called Gagliardo semi-norm of $u$. The space $H^{s}(\R^N)$ can also be described by means of the Fourier transform. Indeed, it is defined by 
\begin{equation*}
H^{s} (\R^N):=\left\{{u\in L^{2}(\R^N)\: :\: \int_{\R^N}(| \xi | ^{2s }| \widehat{u}(\xi)| ^{2}+| \widehat{u}(\xi )| ^{2})d\xi <\infty }\right\},
\end{equation*}
endowed with the norm
\begin{equation*}
\| u\|_{H^{s }}=\left(\int_{\R^N}\left(| \xi | ^{2s}|\widehat{u}(\xi )|^{2}+|\widehat{u}(\xi )|^{2}\right)d\xi \right)^{\frac{1}{2}}.
\end{equation*}

In \cite{29}, the authors show that for $u\in \mathcal{S}(\R^N)$, one has
\begin{equation*}
2C_{N,s}^{-1}\int_{\R^N}| \xi | ^{2s}|\widehat{u}(\xi )|^{2}d\xi=2C_{N,s}^{-1}\| (-\Delta )^{\frac{s}{2}}u\|_{2}^{2}=[u]_{H^{s}}^{2}.
\end{equation*}
Therefore, the norms on $H^{s}(\R^N)$ defined below,
\begin{equation*}
\begin{split}
&u\mapsto \left( \iint_{\R^N\times\R^N}\frac{| u(x)-u(y)|^{2}}{| x-y|^{N+2s}}dxdy+\int_{\R^N}| u(x)|^{2} dx\right)^{\frac{1}{2}}\\
&u\mapsto \left(\int_{\R^N}\left(| \xi | ^{2s}|\widehat{u}(\xi )|^{2}+|\widehat{u}(\xi )|^{2}\right)d\xi \right)^{\frac{1}{2}}\\
&u\mapsto \left(\| (-\Delta )^{\frac{s}{2}}u\|_{2}^{2}+\int_{\R^N}| u(x)|^{2} dx\right)^{\frac{1}{2}}
\end{split}
\end{equation*}
are all equivalent.

The paper is organized as follows. In Section 2, we prove some lemmas, which are crucial to prove our main results. Section 3 is devoted to the proof of \autoref{111}\ and \autoref{112}.

\section{Variational framework and technical lemmas} 
In the sequel, $C,C_i>0$ denote various positive constants which may change from line to line. Let $\Omega \subset \R^N$, then, for $1\leq p\leq +\infty$, we denote by $\|.\|_{p,\Omega}$ the usual norm in $L^{p}(\Omega)$.

Let
\begin{equation*}
H=H(\R^N):=\left\{{ u \in H^s( \R^N)\::\: \int_{\R^N}V^+(x)u^{2} dx<+\infty }\right\},
\end{equation*}
Obviously, $H$ is a Hilbert space equipped with the inner product
\begin{equation*}
\langle u,v\rangle:= \langle u,v\rangle_{H} =\iint_{\R^N\times\R^N}\frac{(u(x)-u(y))(v(x)-v(y))}{| x-y|^{N+2s}}dxdy+\int_{\R^N}V^+(x)uvdx,
\end{equation*}
and the norm $\|u\|=\langle u,u\rangle^{\frac{1}{2}}$, furthermore, similar to \cite[Lemma 2.1]{3}, it is easy to see that the norm $\|.\|$ is equivalent to the usual norm of $H^s(\R^N)$.

Under the condition $(V)$ the embeddings $H(\R^N) \hookrightarrow L^p(\R^N)$ is continuous for $p \in [2,2_{s}^{\ast}]$, that is, there exist constants $\mu_p>0$ such that
\begin{equation}\label{6}
\|u\|_{p}\leq \mu_p ||u||,\quad \forall u\in H(\R^N),\: p\in [2,2_s^*],
\end{equation}
where $2_s^{*} =\frac{2N}{N-2s}$ is the critical Sobolev exponent. Moreover, from \cite[Lemma 2.1]{13}, we know that under the assumption $(V)$, the embedding $H \hookrightarrow L_{loc}^p(\R^N)$ is compact for $2\leq p<2_s^{*}$. 

For the fractional Schr\"{o}dinger equation (\ref{1}), the associated energy functional is defined on $H$ as follows 
\begin{equation}\label{7}
I(u)=\frac{1}{2}\|u\|^2-\frac{1}{2}\int_{\R^N}V^-(x)u^2dx-\int_{\R^N}F(x,u)dx.
\end{equation}
Let $0<l\leq \frac{1}{2}\min\{\delta_1,\delta_2,\delta_3\}$. We define an even function $\eta \in C^1(\R,\R)$ such that $0\leq \eta(t)\leq 1$,
\begin{equation*}
\eta(t)=
\begin{cases}
1 \quad \text{for}\quad |t|\leq l;\\
0 \quad \text{for}\quad |t|\geq 2l;
\end{cases}
\end{equation*}
and $\eta $ is decreasing in $[l,2l]$. 

Let 
\begin{equation}\label{equ0}
f_{\eta}(x,u):=\eta(u)f(x,u),\quad \forall (x,u)\in \R^N\times\R.
\end{equation}
Consider the cut-off functional $I_{\eta}$ defined by:
\begin{equation}\label{8}
I_{\eta}(u)=\frac{1}{2}\int_{\R^{2N}} \frac{| u(x)-u(y)|^{2}}{| x-y|^{N+2s}}dxdy+\frac{1}{2}\int_{\R^N} V(x)u^2dx-\int_{\R^N} F_{\eta}(x,u)dx,
\end{equation}
where $F_{\eta}(x,u)=\int_0^u \eta(s)f(x,s)ds$. Then, the critical points of $I_{\eta}$ are weak solutions of the following equation 
\begin{equation}\label{9}
(-\Delta)^s u+V(x)u=f_{\eta}(x,u), \quad  x\in \R^N.
\end{equation}
Furthermore, if $u\in H$ with $\|u\|_{\infty} \leq l$ is a critical point of the functional $I_{\eta}$, then $u$ is a weak solution of (\ref{1}).

\begin{lemma}\label{221}
Suppose that $(V_a)$, $(V_b)$ and $(F_1)$ hold. Then, the functional $I_\eta$ is well define and of class $C^1(H,\R)$ with
\begin{equation}\label{10}
\langle I_{\eta}^{\prime}(u),v\rangle= \int_{\R^{2N}}\frac{(u(x)-u(y))(v(x)-v(y))}{| x-y|^{N+2s}}dxdy+\int_{\R^N}V(x) u v dx-\int_{\R^N}f_{\eta}(x,u)v dx,
\end{equation}
for all $v\in H$. Moreover, the critical points of $I_{\eta}$ in $H$ are solutions of problem \eqref{9}.
\end{lemma}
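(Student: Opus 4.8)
The plan is to split the functional as $I_\eta(u)=Q(u)-\Psi(u)$, where $Q(u)=\frac{1}{2}\|u\|^2-\frac{1}{2}\int_{\R^N}V^-(x)u^2\,dx$ is the quadratic part (recall $V=V^+-V^-$ and $\|u\|^2=[u]_{H^s}^2+\int_{\R^N}V^+u^2$) and $\Psi(u)=\int_{\R^N}F_\eta(x,u)\,dx$ is the nonlinear part. First I would check that $I_\eta$ is finite on $H$. Since $\eta(s)=0$ for $|s|\ge 2l$ and $2l\le\delta_1$, the bound $(F_1)$ yields $|F_\eta(x,u)|\le\int_0^{|u|}r\xi(x)|s|^{r-1}\,ds=\xi(x)|u|^r$ for $|u|\le 2l$, while for $|u|>2l$ the integrand is supported in $|s|\le 2l$, so $|F_\eta(x,u)|\le\xi(x)(2l)^r\le\xi(x)|u|^r$; hence $|F_\eta(x,u)|\le\xi(x)|u|^r$ for all $u$. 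Hölder's inequality with exponents $\frac{2}{2-r}$ and $\frac{2}{r}$ then gives $\int_{\R^N}|F_\eta(x,u)|\,dx\le\|\xi\|_{2/(2-r)}\|u\|_2^r<\infty$, and the $V^-$ term is finite by $(V)$ together with Hölder (exponents $\frac{N}{2s},2_s^{*},2_s^{*}$) and the embedding $H\hookrightarrow L^{2_s^{*}}$. Thus $I_\eta$ is well defined.

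For the $C^1$ regularity I would treat the two parts separately. The quadratic part $Q$ is smooth: $u\mapsto\frac{1}{2}\|u\|^2$ has derivative $\langle u,\cdot\rangle$, and $(u,v)\mapsto\int_{\R^N}V^-uv\,dx$ is a bounded symmetric bilinear form (by the Hölder estimate above), hence smooth with derivative $\int_{\R^N}V^-uv\,dx$. For $\Psi$ I would first compute the G\^ateaux derivative: fixing $u,v\in H$, the mean value theorem gives $\frac{F_\eta(x,u+tv)-F_\eta(x,u)}{t}=f_\eta(x,u+\theta tv)v$ for some $\theta\in(0,1)$, and for $|t|\le 1$ this is dominated by $r\xi(x)(|u|+|v|)^r\in L^1(\R^N)$ (again by $(F_1)$ and Hölder with $\frac{2}{2-r},\frac{2}{r}$). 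Dominated convergence and the continuity of $f_\eta$ then yield $\langle\Psi'(u),v\rangle=\int_{\R^N}f_\eta(x,u)v\,dx$; the three-exponent Hölder bound $\big|\int f_\eta(x,u)v\,dx\big|\le r\|\xi\|_{2/(2-r)}\|u\|_2^{r-1}\|v\|_2$ (exponents $\frac{2}{2-r},\frac{2}{r-1},2$) shows this is a bounded linear functional of $v$. Adding the pieces reproduces the formula \eqref{10}.

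The main obstacle is the continuity of $u\mapsto\Psi'(u)$ from $H$ into $H^{*}$, which upgrades the G\^ateaux derivative to $C^1$; this is delicate because $r<2$ puts the nonlinearity outside the reach of the textbook Nemytskii continuity theorem and the weight $\xi$ must be carried through. I would argue by contradiction along subsequences: if $u_n\to u$ in $H$ but $\|\Psi'(u_n)-\Psi'(u)\|_{H^{*}}\ge\varepsilon_0$, then since $u_n\to u$ in $L^2$ I may pass to a subsequence with $u_n\to u$ a.e. and $|u_n|\le w$ for a fixed $w\in L^2(\R^N)$. The estimate $|f_\eta(x,u)|^2\le r^2\xi(x)^2|u|^{2(r-1)}$ combined with Hölder (exponents $\frac{1}{2-r},\frac{1}{r-1}$) shows $f_\eta(\cdot,u)\in L^2(\R^N)$ and supplies the $L^1$ dominating function $r^2\xi^2w^{2(r-1)}$; dominated convergence then gives $\|f_\eta(\cdot,u_n)-f_\eta(\cdot,u)\|_2\to 0$. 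Since $\|v\|_2\le C\|v\|$ for $v\in H$, Hölder yields $\|\Psi'(u_n)-\Psi'(u)\|_{H^{*}}\le C\|f_\eta(\cdot,u_n)-f_\eta(\cdot,u)\|_2\to 0$, contradicting the choice of $\varepsilon_0$. Hence $\Psi'$, and therefore $I_\eta'$, is continuous.

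Finally, the identification of critical points follows directly from \eqref{10}: the equation $\langle I_\eta'(u),v\rangle=0$ for all $v\in H$ is exactly the weak formulation of \eqref{9}, so every critical point of $I_\eta$ is a weak solution of \eqref{9}; and if such a $u$ additionally satisfies $\|u\|_{\infty,\R^N}\le l$, then $\eta(u)\equiv 1$, whence $f_\eta(x,u)=f(x,u)$ and $u$ solves \eqref{1}.
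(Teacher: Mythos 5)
Your proof is correct and follows essentially the same route as the paper: the bounds $|f_\eta(x,u)|\le r\xi(x)|u|^{r-1}$ and $|F_\eta(x,u)|\le \xi(x)|u|^r$ plus H\"older and the Sobolev estimate for the $V^-$ term give well-definedness, the mean value theorem with dominated convergence gives the G\^ateaux derivative, and continuity of $\Psi'$ is obtained from $L^2$-convergence of $f_\eta(\cdot,u_n)$ via a.e.\ convergence and an $L^1$ dominating function built from $\xi^2$ and an $L^2$ majorant. The only cosmetic difference is that you invoke the standard Riesz--Fischer fact that an $L^2$-convergent sequence has a subsequence dominated by a fixed $L^2$ function, whereas the paper reconstructs that majorant explicitly via a subsequence with $\sum_k\|u_{n_k}-u\|_2^2<\infty$; the substance is identical.
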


\begin{proof}
By $(F_1)$ and (\ref{equ0}), we have
\begin{equation}\label{11}
\left|f_{\eta}(x,u)\right|\leq r \xi (x)|u|^{r-1},\quad \forall (x,u) \in \R^N\times \R,
\end{equation}
which yields 
\begin{equation*}
\begin{split}
\left|F_{\eta}(x,u)\right|=&\left|F_{\eta}(x,u)-F_{\eta}(x,0)\right|\\
& \leq \int_0^1 |f_{\eta}(x,tu)||u|dt\\
&\leq \xi (x)|u|^{r},\quad \forall (x,u)\in \R^N\times \R,
\end{split}
\end{equation*}
where $r\in (2-\frac{4s}{N},2)$ and $\xi \in L^{\frac{2}{2-r}}(\R^N)$. Then we obtain
\begin{equation}\label{12}
\begin{split}
\int_{\R^N}F_{\eta}(x,u)dx & \leq \int_{\R^N} \xi(x)|u|^{r}dx\\
&\leq \left(\int_{\R^N}|\xi(x)|^{\frac{2}{2-r}}dx\right)^{\frac{2-r}{2}}\left(\int_{\R^N}|u|^2dx\right)^{\frac{r}{2}}\\
& \leq  \|\xi(x)\|_{\frac{2}{2-r}}\|u\|_{2}^{r}\\
& \leq \mu_{2}^{r} \|\xi(x)\|_{\frac{2}{2-r}}\|u\|^{r}.
\end{split}
\end{equation} 
On the other hand, since $V^-\in L^{\frac{N}{2s}}(\R^N)$, by (\ref{4}) and the H\"{o}lder inequality we have
\begin{equation}\label{5}
\int_{\R^N}V^{-}(x)u^2dx \leq \|V^-\|_{\frac{N}{2s}}\|u\|_{2_s^*}^2\leq S^{-1}\|V^-\|_{\frac{N}{2s}}[u]_s^2\leq S^{-1}\|V^-\|_{\frac{N}{2s}}\|u\|^2, 
\end{equation}
for all $u\in H$.
Hence, $I_{\eta}$ is well defined on $H$. Next, we prove that (\ref{10}) holds. According to (\ref{8}), it suffices to show that 
\begin{equation*}
\Psi \in C^{1}(H,\R),\quad \langle\Psi^{\prime}(u),v \rangle= \int_{\R^N} f_{\eta}(x,u)vdx,\quad \forall u,v\in H,
\end{equation*} 
where $\Psi (u)=\int_{\R^N} F_{\eta}(x,u)dx$.

For any function $\theta : \R^N\rightarrow (0,1)$ and $t \in (0,1)$, by $(F_{1})$, (\ref{4}) and the H\"{o}lder inequality, one has
\begin{equation}\label{13}
\begin{split}
&\int_{\R^N}\max\limits_{t\in (0,1)} \left| f_{\eta}\left(x,u+t\theta(x)v\right) v\right| dx\\
&\leq \int_{|u +t\theta (x)v| \leq 2l} r \xi(x) |u +t\theta (x)v|^{r-1} |v|dx\\
&\leq  \int_{|u +t\theta (x)v| \leq 2l} r\xi(x)\left(|u|^{r-1} +|v|^{r-1}\right)|v|dx\\
&\leq  r \|\xi(x)\|_{\frac{2}{2-r}}\left(\|u\|_{2}^{r-1}\|v\|_{2}+\|v\|_{2}^{r}\right)\\
&\leq  r \mu_2^{r}\|\xi(x)\|_{\frac{2}{2-r}} \left(\|u\|^{r-1} \|v\|+ \|v\|^{r}\right).
\end{split}
\end{equation}
Then by (\ref{13}) and the Lebesgue's Dominated Convergence Theorem, we have
\begin{equation} \label{equ 01}
\begin{split}
\langle \Psi^{\prime}(u),v\rangle = & \lim\limits_{t\rightarrow 0^{+}}\frac{\Psi(u+tv)-\Psi(u)}{t}\\
= & \lim\limits_{t\rightarrow 0^{+}}\int_{\R^N}\frac{F_{\eta}(x,u+tv)-F_{\eta}(x,u)}{t}dx\\
= & \lim\limits_{t\rightarrow 0^{+}}\int_{\R^N} f_{\eta}(x,u+t\theta(x)v) v(x)dx\\
= & \int_{\R^N} f_{\eta}(x,u) vdx,
\end{split}
\end{equation}
which implies that (\ref{10}) holds. Moreover, by a standard argument, it is easy to show that the critical points of $I_{\eta}$ are solutions of problem (\ref{1}). It remains to show that $ \Psi^{\prime}$ is continuous. Let $\{u_n\}\subset H$ be a sequence such that $u_n\rightarrow u$ in $H$, therefore $u_n \rightarrow u$ in $L^2(\R^N)$ and 
\begin{equation}\label{equ 1}
\lim_{n\rightarrow\infty}u_n(x) = u(x),\quad \text{a.e. } x\in \R^N.
\end{equation}
We claim that
\begin{equation}\label{equ 2}
f_{\eta}(x,u_n)\rightarrow f_{\eta}(x,u) \text{ strongly in } L^2(\R^N).
\end{equation}
Arguing by contradiction, there exist $\varepsilon_0>0$ and a subsequence $\{u_{n_k}\}$ such that 
\begin{equation}\label{equ 3}
\int_{\R^N} |f_{\eta}(x,u_{n_k})-f_{\eta}(x,u)|^2 dx\geq \varepsilon_0,\quad \forall k\in \N.
\end{equation}
Since $u_n\rightarrow u$ in $L^2(\R^N)$, passing to a subsequence if necessary, one can assume that 
\begin{equation*}
\sum_{k=1}^{\infty} |u_{n_k}-u|_{2,\R^N}^2<+\infty.
\end{equation*}
Therefore, $g(x):=\left[\sum_{k=1}^{\infty}|u_{n_k}-u|^2\right]^{1/2} \in L^2(\R^N)$. On the other hand, by (\ref{11}) and H\"{o}lder's inequality we have
\begin{equation}\label{equ4}
\begin{split}
|f_{\eta}(x,u_{n_k})-f_{\eta}(x,u)|^2 & \leq 2|f_{\eta}(x,u_{n_k})|^2+2|f_{\eta}(x,u)|^2\\
&\leq 4r^2 |\xi(x)|^2 \left(|u_{n_k}(x)|^{2(r-1)}+|u(x)|^{2(r-1)}\right)\\
&\leq C_0 |\xi(x)|^2 \left(|g(x)|^{2(r-1)}+|u(x)|^{2(r-1)}\right)\\
&:=w(x),\quad \forall k\in \N, \: x\in \R^N
\end{split}
\end{equation}
and
\begin{equation}\label{equ5}
\begin{split}
\int_{\R^N}w(x)dx&= C_0\int_{\R^N}|\xi(x)|^2 \left(|g(x)|^{2(r-1)}+|u(x)|^{2(r-1)}\right)dx\\
&\leq C_0 \|\xi\|_{\frac{2}{2-r}}^2\left(\|g\|_{2}^{2(r-1)}+\|u\|_{2}^{2(r-1)}\right)<+\infty.
\end{split}
\end{equation}
Combining (\ref{equ 1}), (\ref{equ4}), (\ref{equ5}) with Lebesgue's Dominated Convergent Theorem we conclude
\begin{equation*}
\lim_{k\rightarrow \infty}\int_{\R^N} |f_{\eta}(x,u_{n_k})-f_{\eta}(x,u)|^2 dx=0,
\end{equation*}
which contradicts (\ref{equ 3}). Thus, (\ref{equ 2}) holds.

It follows from (\ref{equ 01}), (\ref{equ 2}) and H\"older's inequality that
\begin{equation*}
\begin{split}
\left|\langle \Psi'(u_n)-\Psi'(u),v\rangle\right|&\leq \int_{\R^N} |f_{\eta}(x,u_{n})-f_{\eta}(x,u)||v| dx\\
&\leq \left(\int_{\R^N} |f_{\eta}(x,u_{n})-f_{\eta}(x,u)|^2 dx\right)^{1/2}\|v\|_{2}\\
& \leq \mu_2 \left(\int_{\R^N} |f_{\eta}(x,u_{n})-f_{\eta}(x,u)|^2 dx\right)^{1/2}\|v\|\rightarrow 0,
\end{split}
\end{equation*}
as $n\rightarrow \infty$, which shows that $\Psi'$ is continuous. The proof is completed.
\end{proof}

\begin{corollary}\label{2221}
Suppose that $(V_a)$, $(V_b)$ and $(f_1)$ hold. Then, the functional $I$ is well define and of class $C^1(H,\R)$ with
\begin{equation*}
\langle I^{\prime}(u),v\rangle= \int_{\R^{2N}}\frac{(u(x)-u(y))(v(x)-v(y))}{| x-y|^{N+2s}}dxdy+\int_{\R^N}V(x) u v dx-\int_{\R^N}f(x,u)v dx,
\end{equation*}
for all $v\in H$. Moreover, the critical points of $I$ in $H$ are solutions of problem (\ref{1}).
\end{corollary}

Recall that a sequence $\{u_{n}\}\subset E$ is said to be a Palais-Smale sequence at the level $c\in \R$ ((PS)$_{c}$ sequence for short) if $I(u_{n})\rightarrow c$ and $I^{\prime}(u_{n})\rightarrow 0$, $I$ is said to satisfy the Palais-Smale condition at the level $c$ ((PS)$_{c}$ condition for short) if any (PS)$_{c}$-sequence has a convergent subsequence.

\begin{lemma}\label{222}\cite{28}
Let $E$ be a Banach space and $I\in C^1(E,\R)$ satisfy the (PS) condition. If $I$ is bounded from below, then $c=\inf_{E} I$ is a critical value of $I$.
\end{lemma}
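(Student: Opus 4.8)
The plan is to prove this classical result by combining Ekeland's variational principle with the (PS) condition. First I would note that since $I$ is bounded from below, $c=\inf_E I$ is a finite real number. The strategy is then to manufacture a minimizing sequence that is \emph{simultaneously} a Palais--Smale sequence at the level $c$, and afterwards invoke the (PS) condition to extract a limit point that realizes the infimum and is therefore a critical point.

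To build such a sequence, I would apply Ekeland's variational principle with parameter $\frac1n$: for each $n\in\N$ there exists $u_n\in E$ satisfying $I(u_n)\leq c+\frac1n$ together with the perturbed minimality inequality $I(u_n)\leq I(v)+\frac1n\|v-u_n\|$ for every $v\in E$. The first estimate already gives $I(u_n)\to c$. The crucial step is to convert the second inequality into a derivative bound: for any $w\in E$ with $\|w\|=1$ and any small $t>0$, choosing $v=u_n+tw$ yields $I(u_n+tw)-I(u_n)\geq -\frac{t}{n}$; dividing by $t$ and letting $t\to 0^+$, the $C^1$ regularity of $I$ produces $\langle I'(u_n),w\rangle\geq -\frac1n$. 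Replacing $w$ by $-w$ gives $|\langle I'(u_n),w\rangle|\leq\frac1n$, and taking the supremum over unit vectors shows $\|I'(u_n)\|\leq\frac1n\to 0$. Hence $\{u_n\}$ is a (PS)$_c$ sequence.

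Finally, the (PS) condition furnishes a subsequence $u_{n_k}\to u$ strongly in $E$. Continuity of $I$ gives $I(u)=\lim_{k\to\infty} I(u_{n_k})=c$, and continuity of $I'$ gives $I'(u)=\lim_{k\to\infty} I'(u_{n_k})=0$. Therefore $u$ is a critical point of $I$ with $I(u)=c$, which shows that $c$ is a critical value and completes the argument.

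I expect the only genuinely delicate point to be the passage from Ekeland's inequality to the estimate $\|I'(u_n)\|\to 0$, since this is precisely where the $C^1$ hypothesis enters; the remaining steps are routine bookkeeping, and the finiteness of $c$ together with the extraction of a convergent subsequence require no more than the stated hypotheses. An alternative route avoiding Ekeland would be a quantitative deformation argument: if $c$ were not a critical value, one could deform a suitable sublevel set strictly below $c$, contradicting the definition of the infimum; I would nonetheless favor the Ekeland approach here as it is more self-contained and avoids constructing a pseudo-gradient vector field.
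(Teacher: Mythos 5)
Your proof is correct. Note, however, that the paper does not prove this lemma at all: it is quoted verbatim from Rabinowitz's monograph \cite{28}, where the standard argument runs through the deformation theorem (if $c=\inf_E I$ were a regular value, the quantitative deformation lemma would push the nonempty sublevel set $\{I\leq c+\varepsilon\}$ into $\{I\leq c-\varepsilon\}$, which is empty --- a contradiction). Your route via Ekeland's variational principle is the other classical proof, and it is executed properly: the application of Ekeland with parameter $\tfrac1n$ is legitimate since $E$ is complete and $I$ is continuous and bounded below; the passage from the perturbed minimality inequality to $\|I'(u_n)\|_{E^*}\leq \tfrac1n$ by testing with $v=u_n\pm tw$, $\|w\|=1$, $t\to 0^+$ is exactly where the $C^1$ hypothesis is needed; and the final extraction of a strongly convergent subsequence via (PS), followed by continuity of $I$ and $I'$, closes the argument. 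What your approach buys is self-containedness: Ekeland's principle requires no pseudo-gradient flow and works verbatim in any Banach space, whereas the deformation approach needs the machinery of pseudo-gradient vector fields (since $E$ need not be Hilbert) but generalizes more readily to minimax values above the infimum. Either argument is acceptable; yours is complete as written.
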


In order to find the multiplicity of nontrivial critical points of $I$, we will use the genus properties, so we recall the following definitions and results (see \cite{30}).

Let $E$ be a Banach space and $I\in C^1(E,\R)$. We set 
\begin{equation*}
\Gamma =\{A\subset E-{0}\: : \: A \: \text{is closed in}\: E \: \text{and symmetric with respect to}\: 0\}.  
\end{equation*} 
\begin{definition}
For $A\in \Gamma$, we say genus of $A$ is $k$ denoted by $\gamma(A)=k$ if there is an odd map $\Psi \in C(A, \R^N \setminus 0)$ and $k$ is the smallest integer with this property. 
\end{definition}   
For any $k \in \N$, we set 
\begin{equation*}
\Gamma_k=\{ A\in \Gamma \: : \: \gamma(A)\geq k\}.
\end{equation*}
Then, we have the following lemma from \cite{30}.
\begin{lemma}\label{223}
Let $E$ be an infinite dimensional Banach space and $I \in C^1(E,\R)$ satisfies $(A_1)$ and $(A_2)$ below:
\begin{list}{}{}
\item[$(A_1)$] $I$ is even, bounded from below, $I(0)=0$ and $I$ satisfies the (PS) condition.
\item[$(A_2)$] For each $k\in \N$, there exists an $A_k\in \Gamma_k$ such that $\sup_{u\in A_k} I(u)<0$.
\end{list}
Then $I$ admits a sequence of critical points $u_k$ such that $I(u_k)\leq 0$, $u_k\neq 0$ and $\lim_{k\rightarrow \infty}u_k=0$.
\end{lemma}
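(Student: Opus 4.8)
The plan is to run the classical genus-based minimax scheme of Clark--Kajikiya type, extracting critical points from the family of minimax levels
\[
c_k := \inf_{A\in\Gamma_k}\ \sup_{u\in A} I(u),\qquad k\in\N,
\]
and then to show that these levels accumulate at $0$ from below, which forces the associated critical points to shrink to the origin. The toolkit I would assume is twofold. First, the standard properties of the genus $\gamma$ from \cite{30}: monotonicity under odd continuous maps (so $\gamma(h(A))=\gamma(A)$ for an odd homeomorphism $h$, and $\gamma(A)\le\gamma(B)$ when $A\subset B$), subadditivity $\gamma(A\cup B)\le\gamma(A)+\gamma(B)$, and the fact that a compact symmetric set $K$ with $0\notin K$ has finite genus and a symmetric closed neighborhood $N$ with $\gamma(N)=\gamma(K)$, while $\gamma(K)\ge2$ forces $K$ to be infinite. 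Second, the symmetric deformation lemma, available because $I$ is even and satisfies $(PS)$: for each $c$ and each neighborhood $U$ of the critical set $K_c:=\{u:I(u)=c,\ I'(u)=0\}$ there exist $\varepsilon>0$ and an odd homeomorphism $\eta$ of $E$ with $\eta(I^{c+\varepsilon}\setminus U)\subset I^{c-\varepsilon}$, where $I^{c}:=\{u\in E:I(u)\le c\}$.

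First I would record the elementary properties of the levels. Since $\Gamma_{k+1}\subset\Gamma_k$ we get $c_k\le c_{k+1}$; since $I$ is bounded below, $c_k\ge\inf_E I>-\infty$; and $(A_2)$ provides, for each $k$, a set $A_k\in\Gamma_k$ with $\sup_{A_k}I<0$, whence $c_k<0$. Thus $-\infty<c_1\le c_2\le\cdots<0$. Next I would show each $c_k$ is a critical value by the usual contradiction: if $K_{c_k}=\emptyset$, apply the deformation lemma with $U=\emptyset$ to an almost-optimal $A\in\Gamma_k$ satisfying $A\subset I^{c_k+\varepsilon}$; then $\eta(A)\subset I^{c_k-\varepsilon}$, and since $\eta$ is an odd homeomorphism $\eta(A)\in\Gamma_k$, contradicting the definition of $c_k$. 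The same deformation argument, now with $U$ a symmetric neighborhood of $K_c$ and genus subadditivity applied to $\overline{A\setminus U}$, yields the multiplicity statement: if $c=c_k=\cdots=c_{k+p}$ with $p\ge1$ then $\gamma(K_c)\ge p+1$; in particular $\gamma(K_c)\ge2$, so $K_c$ is infinite. This already guarantees infinitely many critical points.

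The next step is to prove $c_k\to0^-$. Being nondecreasing and negative, $c_k\to c^\ast\le0$; suppose $c^\ast<0$. Then $K_{c^\ast}$ is compact by $(PS)$ and avoids $0$ (since $I(0)=0\ne c^\ast$), so it has finite genus $m$ and a symmetric neighborhood $U$ with $\gamma(\overline U)=m$. Fix the $\varepsilon>0$ furnished by the deformation lemma at level $c^\ast$ with this $U$, pick $k$ large enough that $c_k>c^\ast-\varepsilon$, and choose $A\in\Gamma_{k+m}$ with $\sup_A I<c_{k+m}+\varepsilon\le c^\ast+\varepsilon$. By subadditivity $\gamma(\overline{A\setminus U})\ge\gamma(A)-\gamma(\overline U)\ge k$, so $\overline{A\setminus U}\in\Gamma_k$, while $\eta(\overline{A\setminus U})\subset I^{c^\ast-\varepsilon}$ forces $c_k\le c^\ast-\varepsilon$, a contradiction. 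Hence $c^\ast=0$.

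Finally, the convergence $u_k\to0$ is the step I expect to be the main obstacle. From $c_k\nearrow0$ I pick critical points $u_k$ with $I(u_k)=c_k$; then $I(u_k)\le0$, and since $c_k<0=I(0)$ we have $u_k\ne0$. As $\{u_k\}$ is a $(PS)_0$ sequence a subsequence converges, but $(PS)$ alone only places the limit at some critical point of level $0$, not a priori at the origin. Upgrading this to convergence to $0$ is the technical heart of the lemma. The idea is that the negative sublevel sets carry unboundedly large genus as the level tends to $0$ (indeed $\gamma(I^{c})\ge k$ whenever $c>c_k$, so $\gamma(I^{c})\to\infty$ as $c\to0^-$), whereas a localized symmetric deformation in any fixed annulus $\{\|u\|\ge\rho\}$ caps the genus that negative-energy critical points bounded away from the origin can support; reconciling these through the subadditivity splitting $\gamma(I^{c})\le\gamma(I^{c}\cap\overline{B_\rho})+\gamma(\overline{I^{c}\setminus B_\rho})$ shows that the genus blow-up near level $0$ must be produced by critical points approaching the origin, and a diagonal selection over $\rho=1/m$ then yields a sequence of nonzero critical points with $I\le0$ converging to $0$. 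This localization, where the full strength of $(A_1)$ (evenness, $(PS)$, and $I(0)=0$) is used, is the part demanding the most care.
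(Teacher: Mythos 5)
This lemma is not proved in the paper at all: it is Kajikiya's critical point theorem, quoted from reference \cite{30}, and the paper's only ``proof'' is that citation. Your proposal must therefore be judged as a from-scratch proof of Kajikiya's theorem. Its first stages are correct and match the skeleton of Kajikiya's own argument: the levels $c_k=\inf_{A\in\Gamma_k}\sup_{A}I$ are finite, nondecreasing and negative, each is a critical value, coincidence $c_k=\cdots=c_{k+p}$ forces $\gamma(K_{c_k})\ge p+1$, and the excision-plus-deformation argument gives $c_k\to 0^-$.

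The genuine gap is exactly where you predict it, and your sketch does not close it. First, the splitting $\gamma(I^{c})\le\gamma(I^{c}\cap\overline{B_\rho})+\gamma(\overline{I^{c}\setminus B_\rho})$, with $B_\rho=\{u\in E:\|u\|<\rho\}$, cannot show that ``the genus blow-up must be produced by critical points approaching the origin'': in an infinite dimensional space a closed symmetric set of arbitrarily small diameter (a small sphere, say) already has infinite genus, so the divergence of $\gamma(I^{c}\cap\overline{B_\rho})$ as $c\to 0^-$ carries no information whatsoever about critical points inside $\overline{B_\rho}$; genus detects critical points only through a deformation argument, and none is exhibited. Second, the deformation that would actually be needed --- assuming for contradiction that every critical point $u\neq 0$ with $I(u)\le 0$ lies outside $B_\rho$, construct an odd continuous map pushing $I^{c}$ into a fixed closed symmetric neighborhood $N$ of the compact set $D=\{u:I'(u)=0,\ \inf_E I\le I(u)\le 0\}\setminus B_\rho$, whence $\gamma(I^{c})\le\gamma(N)<\infty$, contradicting $\gamma(I^c)\ge k$ for $c>c_k$ --- is not obtainable from the single-level deformation lemma you quote, precisely because the critical values $c_k$ accumulate at $0$: every interval $(c,0)$ contains critical values, so one cannot descend through the strip by finitely many level-by-level deformations, and the naive time-$T$ flow map fails because trajectories may enter and leave $N$ (and enter and leave $B_\rho$), so at time $T$ a point need be neither below the target level nor inside $N$. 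Making a ``flow until first entry into $N$'' map continuous and odd is the delicate construction that constitutes the real content of Kajikiya's paper. Finally, your selection $u_k\in K_{c_k}$ can perfectly well converge (after passing to a subsequence, via (PS)) to a \emph{nonzero} critical point at level $0$; Kajikiya's theorem handles this through a dichotomy --- either negative-level critical points accumulate at $0$, or nonzero critical points at level exactly $0$ do --- and the concluding ``diagonal selection over $\rho=1/m$'' you invoke has no argument behind it. So the proposal is sound up to and including $c_k\to 0^-$, but the convergence $u_k\to 0$, which is the only respect in which this lemma is stronger than the classical Clark theorem, remains unproved.
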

\section{Proof of the main results}
\begin{lemma}\label{331}
Under the assumptions $(V_a)$, $(V_b)$ and $(F_1)$, the functional $I_{\eta}$ is bounded from below and satisfies the (PS) condition.
\end{lemma}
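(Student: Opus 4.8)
The goal is to establish two separate facts about the cut-off functional $I_\eta$: that it is bounded below on $H$, and that every (PS) sequence admits a convergent subsequence. The boundedness from below is the routine part and should be dispatched first. Starting from the definition \eqref{8}, I would rewrite $I_\eta(u)=\frac{1}{2}[u]_s^2-\frac{1}{2}\int_{\R^N}V^-(x)u^2dx+\frac{1}{2}\int_{\R^N}V^+(x)u^2dx-\int_{\R^N}F_\eta(x,u)dx=\frac{1}{2}\|u\|^2-\frac{1}{2}\int_{\R^N}V^-(x)u^2dx-\int_{\R^N}F_\eta(x,u)dx$. Using the estimate \eqref{5}, the negative potential term is controlled by $S^{-1}\|V^-\|_{N/2s}\|u\|^2$, and condition $(V)$ guarantees $\|V^-\|_{N/2s}<S$, so $\frac{1}{2}\|u\|^2-\frac{1}{2}\int_{\R^N}V^-u^2dx\geq \frac{1}{2}\bigl(1-S^{-1}\|V^-\|_{N/2s}\bigr)\|u\|^2=:\frac{c_0}{2}\|u\|^2$ with $c_0>0$. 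Combining this with the bound \eqref{12} on $\int F_\eta(x,u)dx$ gives $I_\eta(u)\geq \frac{c_0}{2}\|u\|^2-\mu_2^r\|\xi\|_{2/(2-r)}\|u\|^r$. Since $r\in(1,2)$, the quadratic term dominates the power $\|u\|^r$ for large $\|u\|$, so $I_\eta(u)\to+\infty$ as $\|u\|\to\infty$; hence $I_\eta$ is coercive and in particular bounded below.

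The (PS) condition is the substantive step, and the main obstacle is the lack of compactness of the embedding $H\hookrightarrow L^2(\R^N)$ on the whole space (the embedding is only locally compact, per \cite[Lemma 2.1]{13}). Given a (PS) sequence $\{u_n\}$, coercivity immediately yields that $\{u_n\}$ is bounded in $H$, so up to a subsequence $u_n\rightharpoonup u$ weakly in $H$, $u_n\to u$ strongly in $L^p_{loc}(\R^N)$ for $2\le p<2_s^*$, and $u_n\to u$ a.e. The plan is then to upgrade weak convergence to strong convergence $u_n\to u$ in $H$. I would test the difference $\langle I_\eta'(u_n)-I_\eta'(u),u_n-u\rangle$, which tends to $0$ since $I_\eta'(u_n)\to0$ and $u_n-u\rightharpoonup 0$. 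Expanding via \eqref{10}, this produces the principal term $\|u_n-u\|^2$ minus a perturbation from the negative potential $\int V^-(x)(u_n-u)^2dx$ (handled by \eqref{5}, since $S^{-1}\|V^-\|_{N/2s}<1$ leaves a coercive remainder) and minus the nonlinear term $\int[f_\eta(x,u_n)-f_\eta(x,u)](u_n-u)dx$.

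The crux is showing the nonlinear term vanishes. Here the cut-off structure is decisive: by \eqref{11}, $|f_\eta(x,u_n)-f_\eta(x,u)|\le r\xi(x)(|u_n|^{r-1}+|u|^{r-1})$, so by Hölder the integral is bounded by $r\|\xi\|_{2/(2-r)}(\|u_n\|_2^{r-1}+\|u\|_2^{r-1})\|u_n-u\|_2$. The decay of $\xi\in L^{2/(2-r)}(\R^N)$ lets me split $\R^N$ into a large ball $B_R$ and its complement; on $B_R$ the local compactness gives $u_n\to u$ in $L^2(B_R)$ so that piece tends to $0$, while on $\R^N\setminus B_R$ the tail of $\xi$ makes the contribution uniformly small. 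Consequently $\int[f_\eta(x,u_n)-f_\eta(x,u)](u_n-u)dx\to0$. Collecting terms, the relation $\langle I_\eta'(u_n)-I_\eta'(u),u_n-u\rangle\to0$ forces $\bigl(1-S^{-1}\|V^-\|_{N/2s}\bigr)\|u_n-u\|^2\to 0$, whence $u_n\to u$ strongly in $H$. This establishes the (PS) condition and completes the proof. \qed
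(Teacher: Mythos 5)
Your proposal is correct and follows essentially the same route as the paper's proof: coercivity of $I_\eta$ from the estimates \eqref{5} and \eqref{12} (using $\|V^-\|_{N/2s}<S$ and $r\in(1,2)$), boundedness of (PS) sequences, weak convergence plus local compactness, the splitting of $\R^N$ into a ball $B_R$ and its complement exploiting $\xi\in L^{2/(2-r)}(\R^N)$ to kill the nonlinear term, and finally the inequality $\bigl(1-S^{-1}\|V^-\|_{N/2s}\bigr)\|u_n-u\|^2\leq o_n(1)+\int_{\R^N}[f_\eta(x,u_n)-f_\eta(x,u)](u_n-u)\,dx$ to conclude strong convergence. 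No gaps; this matches the paper's argument step for step.
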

\begin{proof}
By $(V_b)$, (\ref{8}), (\ref{12}) and (\ref{5}) we have
\begin{equation}\label{14}
\begin{split}
I_{\eta}(u)=&\frac{1}{2}\|u\|^2-\frac{1}{2}\int_{\R^N} V^-(x)u^2dx-\int_{\R^N}F_{\eta}(x,u)dx\\
\geq & \frac{1}{2} \|u\|^2-\frac{1}{2} S^{-1}\|V^-\|_{\frac{N}{2s}}\|u\|^2-\mu_{2}^{r} \|\xi(x)\|_{\frac{2}{2-r}} \|u\|^{r}\\
=& \frac{1}{2}\left(1-S^{-1}\|V^-\|_{\frac{N}{2s}}\right)\|u\|^2-\mu_{2}^{r} \|\xi(x)\|_{\frac{2}{2-r}}\|u\|^{r}.
\end{split}
\end{equation} 
Then by (\ref{14}) we conclude that $I_{\eta}$ is bounded from below since $r\in (2-\frac{4s}{N},2)$.

Next, we prove that $I_{\eta}$ satisfies the (PS) condition. Let $\{u_n\}\subset H$ be any (PS) sequence of $I_{\eta}$, i.e., $\{I_{\eta}(u_n)\}$ is bounded and $I^{'}_{\eta}(u_n)\rightarrow 0$ in $H^*$.

From (\ref{14}) we have
\begin{equation*}
C_1\geq I_{\eta}(u_n)\geq \frac{1}{2}\left(1-S^{-1}\|V^-\|_{\frac{N}{2s}}\right)\|u_n\|^2-\mu_{2}^{r} \|\xi(x)\|_{\frac{2}{2-r}}\|u_n\|^{r}.
\end{equation*}
This implies that $\{u_n\}$ is bounded in $H$ since $r\in (1,2)$ and $C_1$ is independent of $n$, that is, there exists a constant $C_2>0$ which is independent of $n$ such that
\begin{equation}\label{15}
\|u_n\| \leq C_2,\quad \forall n\in \N.
\end{equation}
Therefore, up to a subsequence, there exists $u \in H$ such that $u_n \rightharpoonup u$ in $H$ and
\begin{equation}\label{16}
u_n\rightarrow u\text{ in } L_{loc}^p(\R^N),\quad p\in [2,2_s^*).
\end{equation}
By $(F_1)$, for any given $\varepsilon>0$, we can choose $R>0$ such that 
\begin{equation}\label{17}
\left(\int_{|x|>R}|\xi(x)|^{\frac{2}{2-r}}dx\right)^{\frac{2-r}{2}}<\varepsilon.
\end{equation}
On the other hand, from (\ref{16}) we get
\begin{equation*}
\lim_{n\rightarrow\infty}\int_{|x|\leq R}|u_n-u|^2dx=0,
\end{equation*}
which implies that there exists $n_0\in \N$ such that
\begin{equation}\label{19}
\int_{|x|\leq R}|u_n-u|^2dx\leq \varepsilon^2,\quad \text{for all } n\geq n_0.
\end{equation}
Therefore, for any $n\geq n_0$, exploiting $(F_1)$, (\ref{15}), (\ref{19}) and H\"{o}lder's inequality we can infer that
\begin{equation}\label{20}
\begin{split}
& \int_{B_R} |f_{\eta}(x,u_{n})-f_{\eta}(x,u)||u_n-u| dx\\
&\leq \left(\int_{B_R} |f_{\eta}(x,u_{n})-f_{\eta}(x,u)|^2 dx\right)^{1/2}\|u_n-u\|_{2,B_R}\\
& \leq \varepsilon \left[\int_{B_R} 2\left(|f_{\eta}(x,u_{n})|+|f_{\eta}(x,u)|^2\right) dx\right]^{1/2}\\
&\leq 2\varepsilon \left[ r^2 \int_{B_R}|\xi(x)|^2 \left(|u_{n}(x)|^{2(r-1)}+|u(x)|^{2(r-1)}\right)dx\right]^{1/2}\\
&\leq  2\varepsilon \left[ r^2 \|\xi\|_{\frac{2}{2-r},B_R}^2\left(\|u_n\|_{2,B_R}^{2(r-1)}+\|u\|_{2,B_R}^{2(r-1)}\right)\right]^{1/2}\\
&\leq 2\varepsilon \left[  r^2 \|\xi\|_{\frac{2}{2-r},B_R}^2\left(C_2^{2(r-1)}+\|u\|_{2,B_R}^{2(r-1)}\right)\right]^{1/2}\\
&\leq \varepsilon C_3,
\end{split}
\end{equation}
where $B_R:=\{ x\in \R^N\: :\: |x|<R\}$. Let $\Omega_R:=\R^N\setminus B_R$, then combining $(F_1)$, (\ref{15}), (\ref{17}) with the H\"{o}lder inequality, one has
\begin{equation}\label{21}
\begin{split}
& \int_{\Omega_R} |f_{\eta}(x,u_{n})-f_{\eta}(x,u)||u_n-u| dx\\
&\leq  r \int_{\Omega_R}|\xi(x)| \left(|u_{n}(x)|^{(r-1)}+|u(x)|^{(r-1)}\right)\left(|u_{n}(x)|+|u(x)|\right)dx\\
& \leq 2  r \int_{\Omega_R}|\xi(x)| \left(|u_{n}(x)|^{r}+|u(x)|^{r}\right)dx\\
&\leq 2r\left(\int_{\Omega_R}|\xi(x)|^{\frac{2}{2-r}}dx\right)^{\frac{2-r}{2}}\left(\|u_n\|_{2,\Omega_R}^{r}+\|u\|_{2,\Omega_R}^{r}\right)\\
&\leq \varepsilon C_4,
\end{split}
\end{equation}
this together with (\ref{20}) implies that 
\begin{equation}\label{22}
 \int_{\R^N} |f_{\eta}(x,u_{n})-f_{\eta}(x,u)||u_n-u| dx\rightarrow 0,
\end{equation}
as $n\rightarrow \infty$. Since $I'_\eta(u_n)\rightarrow 0$ in $H^*$, it follows from $(V)$, (\ref{10}) and (\ref{5}) that
\begin{equation*}
\begin{split}
o_n(1)& =\langle I_{\eta}^{'}(u_n)-I_{\eta}^{'}(u),u_n-u\rangle\\
&= \|u_n-u\|^2-\int_{\R^N}V^-(x)|u_n-u|^2dx-\int_{\R^N}\left[f_{\eta}(x,u_n)-f_{\eta}(x,u)\right](u_n-u) dx\\
&\geq \left(1-S^{-1}\|V^-\|_{\frac{N}{2s}}\right)\|u_n-u\|^2-\int_{\R^N}\left[f_{\eta}(x,u_n)-f_{\eta}(x,u)\right](u_n-u) dx,
\end{split}
\end{equation*}
and then
\begin{equation}\label{23}
\|u_n-u\|^2  \leq o_n(1)+\frac{1}{1-S^{-1}\|V^-\|_{N/2s}} \int_{\R^N}\left[f_{\eta}(x,u_n)-f_{\eta}(x,u)\right](u_n-u)dx.
\end{equation} 
Consequently, by (\ref{22}) and (\ref{23}) we conclude that
\begin{equation*}
u_n\rightarrow u, \quad \text{strongly in} \: H\quad \text{as}\quad n\rightarrow \infty.
\end{equation*}
Thus, the proof is completed.
\end{proof}

\begin{corollary}\label{3331}
Assume that $(V_a)$, $(V_b)$ and $(f_1)$, the functional $I$ is bounded from below and satisfies the (PS) condition.
\end{corollary}

\begin{proof}[\textbf{Proof of Theorem 1.1}]
By corollary \ref{3331}, $I$ is bounded from below and satisfies the (PS) condition. Then Lemma \ref{222} implies that $c=\inf_{H} I(u)$ is a critical value of $I$, that is, there exists a critical point $u^*\in H$ of $I$ such that $I(u^*)=c$. Next, we show that $u^*\neq 0$. Let $u_0 \in C_0^\infty(\R^N)\setminus \{0\}$ and $\Omega=\{x \in \R^N\: ;\: |u_0(x)|\leq 1\}$, then by $(F_2)$ we have that there exists a small $\delta_2 >0$ such that 
\begin{equation}\label{24}
F(x,u) \geq M|u|^2,\quad  |u| \leq \delta_2,\: \forall x\in \R^N, \: \forall M>0.
\end{equation} 
For $0<t<\delta_2$, it follows from (\ref{7}) and (\ref{24}) that
\begin{equation}\label{25}
\begin{split}
I(tu_0) & =  \frac{1}{2}t^2  \|u_0\|^{2}-\frac{t^2}{2}\int_{\R^N}V^-(x)u_0^2dx-\int_{\R^N}F(x,tu_0)dx\\
& \leq \frac{1}{2}t^2  \|u_0\|^{2} -t^2 M\int_{\Omega}|u_0|^2dx.
\end{split}
\end{equation}
Choosing $M>0$ sufficiently large such that $2M\int_{\Omega}|u_0|^2dx-\|u_0\|^{2}>0$, it follows from (\ref{25}) that $I(tu_0)<0$ for $t>0$ small enough. Hence $I(u^*)=c<0$, which implies that $u^*$ is a nontrivial critical point of $I$, and thus $u^*$ is a nontrivial solution of problem (\ref{1}). The proof is completed.
\end{proof}

\begin{lemma}\label{332}
Assume that $(V_a)$ and $(F_2)$ hold. Then, for any $n\in \N$, there exists a closed symmetric subset $A_n\subset H$ such that the genus $\gamma(A_n)\geq n$ and $\sup_{u\in A_n}I_{\eta}(u)<0$.
\end{lemma}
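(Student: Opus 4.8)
The plan is to verify hypothesis $(A_2)$ of Lemma~\ref{223} directly: for each $n$ I will exhibit a small sphere inside a well-chosen $n$-dimensional subspace of $H$, because such a sphere is closed, symmetric with respect to $0$, and has genus exactly $n$. The mechanism forcing $I_\eta<0$ on it will be condition $(F_2)$, which through \eqref{24} lets the nonlinear term dominate the quadratic part once $M$ is taken large.

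First I would fix a bounded smooth domain $\Omega\subset\R^N$ and pick $n$ linearly independent functions $e_1,\dots,e_n\in C_0^\infty(\Omega)$, setting $E_n:=\operatorname{span}\{e_1,\dots,e_n\}\subset H(\Omega)\subset H$. Since $\dim E_n=n<\infty$, all norms on $E_n$ are equivalent, so there exist constants $c_n,C_n>0$ with
\begin{equation*}
c_n\|u\|\leq \|u\|_2 \quad\text{and}\quad \|u\|_{\infty}\leq C_n\|u\|,\qquad \forall\, u\in E_n.
\end{equation*}
The finite-dimensionality here is essential: it is precisely what allows me to pass freely between $\|\cdot\|$, $\|\cdot\|_2$ and $\|\cdot\|_{\infty}$ on $E_n$.

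Next I would exploit $(F_2)$. Because $M>0$ is arbitrary there, and because $\eta(u)=1$ whenever $|u|\leq l$ (so that $F_\eta=F$ on that range, as $l\leq\tfrac12\delta_2$), estimate \eqref{24} yields $F_\eta(x,u)\geq M|u|^2$ for all $x\in\R^N$ and $|u|\leq l$, with $M$ at our disposal. For $u\in E_n$ with $\|u\|_\infty\leq l$, writing $I_\eta$ as in the first line of \eqref{14} and discarding the nonnegative contribution $\tfrac12\int_{\R^N}V^-u^2\,dx$, I obtain
\begin{equation*}
I_\eta(u)\leq \frac12\|u\|^2-M\|u\|_2^2\leq \Big(\frac12-Mc_n^2\Big)\|u\|^2.
\end{equation*}
I now fix $M$ so large (depending only on $n$ through $c_n$) that $\beta_n:=Mc_n^2-\tfrac12>0$, giving $I_\eta(u)\leq-\beta_n\|u\|^2$ on this set. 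Finally I choose $\rho_n\in(0,\,l/C_n]$ and set $A_n:=\{u\in E_n:\|u\|=\rho_n\}$; the bound $\|u\|_\infty\leq C_n\rho_n\leq l$ then holds on all of $A_n$, so $\sup_{u\in A_n}I_\eta(u)\leq-\beta_n\rho_n^2<0$. Since $A_n$ is the sphere of radius $\rho_n$ in the $n$-dimensional space $E_n$, it is closed, symmetric, and odd-homeomorphic to $S^{n-1}$, whence $\gamma(A_n)=n\geq n$ and $A_n\in\Gamma_n$.

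The only delicate point, and where I expect the difficulty to hide, is the order of the choices. I must pin down $E_n$ first, then enlarge $M$ to make the leading coefficient negative, and only afterwards shrink $\rho_n$ to enforce $\|u\|_\infty\leq l$ (so that both $\eta\equiv1$ and $(F_2)$ are genuinely in force). The key observation making this consistent is that once $\tfrac12-Mc_n^2<0$, shrinking $\rho_n$ never destroys the strict negativity of $I_\eta$ on $A_n$; it merely helps satisfy the pointwise constraint. Thus the competition between \emph{needing $u$ small in $L^\infty$} and \emph{needing the nonlinearity to win} is resolved by the freedom in $M$ granted by $(F_2)$, together with the norm equivalences available on the finite-dimensional space $E_n$.
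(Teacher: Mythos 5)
Your proof is correct (granting, exactly as the paper itself does, the reading of $(F_2)$ embodied in \eqref{24}), but it takes a genuinely different route at the one technically delicate point. The paper works with an \emph{arbitrary} $n$-dimensional subspace $H_n\subset H$, whose elements need not be bounded; there the pointwise constraint $|u|\leq l$ (needed so that $F_\eta=F$ and the quadratic lower bound applies) cannot be enforced by shrinking the sphere, so the paper instead proves a measure-theoretic claim, inequality \eqref{27}, namely that $\int_{|u|>l}|u|^2\,dx\leq\frac12\int_{\R^N}|u|^2\,dx$ whenever $\|u\|\leq\tau$, via a compactness/contradiction argument in $H_n$, and then splits $\int F_\eta$ over $\{|u|\leq l\}$ and $\{|u|>l\}$, discarding the latter region. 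You avoid all of this by choosing the subspace well: spanning $E_n$ by $C_0^\infty$ functions makes $\|\cdot\|_\infty$ a finite, hence equivalent, norm on $E_n$, so on a sphere of radius $\rho_n\leq l/C_n$ the bound $\|u\|_\infty\leq l$ holds automatically and the estimate reduces to the one-line comparison of $\frac12\|u\|^2$ with $M\|u\|_2^2$. Your version is more elementary and in fact somewhat more robust: it never needs to argue that the contribution of $F_\eta$ on $\{|u|>l\}$ can be dropped (a positivity point the paper leaves implicit), and since $\rho_n$ is chosen last, your argument would survive the more honest reading of $(F_2)$ in which the threshold $\delta_2$ depends on $M$, whereas the paper's fixed-level claim \eqref{27} is tied to the level $l$. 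What the paper's argument buys in exchange is that its conclusion holds for small spheres in \emph{every} $n$-dimensional subspace of $H$, a generality not needed for the lemma or for its use in Theorem~\ref{112}. Both proofs finish identically, by observing that a sphere centered at the origin in an $n$-dimensional subspace is compact, symmetric, odd-homeomorphic to $S^{n-1}$, and hence of genus $n$.
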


\begin{proof}
Let $H_n$ be any $n-$dimensional subspace of $H$. Since all norms are equivalent in a finite dimensional space, there is a constant $\beta =\beta(H_n)$ such that
\begin{equation}\label{26}
\|u\| \leq \beta |u|_{2},
\end{equation}
for all $u\in H_{n}$.
\item[\textbf{Claim.}] There exists a constant $\tau>0$ such that 
\begin{equation}\label{27}
\frac{1}{2} \int_{\R^N}|u|^2dx\geq \int_{|u|>l}|u|^2 dx
\end{equation}
for all $u\in H_n$ with $\|u\| \leq \tau$. In fact, if (\ref{27}) is false, there exists a sequence $\{u_k\}\in H_n$ such that $u_k\rightarrow 0$ in $H$ and 
\begin{equation*}
\frac{1}{2} \int_{\R^N}|u_k|^2dx< \int_{|u_k|>l}|u_k|^2 dx,\quad k\in \N.
\end{equation*}
Let $w_k :=\frac{u_k}{\|u_k\|_{2}}$. Then, we obtain
\begin{equation}\label{28}
\frac{1}{2}<\int_{|u_k|>l}|w_k|^2 dx,\quad k\in \N.
\end{equation}
On the other hand, we can assume that $w_k \rightarrow w$ in $H$ since $H_n$ is finite dimensional. Hence $w_k\rightarrow w$ in $L^2(\R^N)$. Moreover, it can be deduced from $u_k\rightarrow 0$ in $H$ that
\begin{equation*}
\meas\{x\in \R^N\: :\: |u_k|>l\}\rightarrow 0, \quad k\rightarrow \infty.
\end{equation*}
Therefore, 
\begin{equation*}
\int_{|u_k|>l}|w_k|^2 dx\leq 2\int_{|u_k|>l}|w_k-w|^2 dx+2\int_{|u_k|>l}|w|^2 dx\rightarrow 0,\quad k\rightarrow \infty,
\end{equation*}
which contradicts (\ref{28}). Thus, (\ref{27}) holds.

By $(F_2)$ we have
\begin{equation*}
f_{\eta}(x,u)\geq 4 \beta^2 u,\quad  |u| \leq 2l, \: \forall x\in \R^N.
\end{equation*}
This inequality implies that 
\begin{equation}\label{29}
F_{\eta}(x,u)=F(x,u)\geq 2\beta^2 u^2,\quad \forall (x,u)\in \R^N\times \R,\quad |u|\leq l.
\end{equation}
Therefore, it follows from (\ref{8}), (\ref{27}) and (\ref{29}) that
\begin{equation*}
\begin{split}
I_{\eta}(u) & = \frac{1}{2} \|u\|^{2}-\frac{1}{2}\int_{\R^N}V^-(x)u^2dx-\int_{\R^N} F_{\eta}(x,u)dx\\
& \leq \frac{1}{2} \|u\|^{2} -\int_{|u|\leq l} F_{\eta}(x,|u|)dx\\ 
& \leq \frac{1}{2} \|u\|^{2} -2\beta^2\int_{|u|\leq l}|u|^2dx\\
& \leq \frac{1}{2}\|u\|^{2}-2\beta^2\left( \int_{\R^N}|u|^2dx-\int_{|u|> l}|u|^2dx\right)\\
& \leq -\frac{1}{2}\|u\|^2,
\end{split}
\end{equation*}
for all $u\in H_n$ with $\|u\| \leq \min\{\tau,1\}$.

Let $0<\rho\leq \min\{\tau,1\}$ and $A_n=\{u\in H_n\::\:\|u\|=\rho\}$. We conclude that $\gamma(A_n)\geq n$ and $\sup_{u\in A_n} I_{\eta}(u)\leq -\frac{1}{2}\|u\|^2<0$. The proof is completed.
\end{proof}

In order to prove the conclusion of Theorem \ref{112}, we shall use the Moser iteration technique (see \cite{Amb4,Lin,Moser}) to show that the sequence of solutions tends to zero in  $L^\infty(\R^N)$. First, we recall some preliminary concepts about the $s-$harmonic extension problem related to the problem \eqref{9} (see \cite{cafar}). For $s\in (0,1)$, we define the space $X^s$ as the completion of $C_0^\infty(\R_+^{N+1})$ with respect to the norm
\begin{equation*}
\|v\|_{X^s}^2=\frac{1}{\kappa_s}\int_{\R_+^{N+1}}y^{1-2s}|\nabla v|^2 dxdy,
\end{equation*}
where $\kappa_s=(2^{1-2s}\Gamma(1-s))/\Gamma(s)$ and $\Gamma$ is the well known gamma function. According to \cite{cafar}, $u$ is a weak solution for \eqref{9}, if and only if $v=E_s(u)$ is a weak solution for the problem
\begin{equation}\label{30}
\begin{cases}
& div(y^{1-2s} \nabla v)=0,\quad \text{in} \: \R_+^{N+1},\\
&-\frac{1}{\kappa_s}\lim\limits_{y\rightarrow 0^+} \frac{\partial v}{\partial y}(x,y)=f_\eta(x,v(x,0))-V(x)v(x,0)\quad \text{in} \: \R^{N}.
\end{cases}
\end{equation}
Here $E_s(u)$ is the $s-$harmonic extension of $u$, the unique solution of the minimization problem
\begin{equation*}
\min\{\frac{1}{\kappa_s}\int_{\R_+^{N+1}}y^{1-2s}|\nabla v|^2 dxdy,\: v\in X^s \text{ and } v(.,0)=u\: \text{ on } \R^N\}.
\end{equation*}
Furthermore, by \cite{brand}, $E_s$ is an isometry, that is, $\|E_s(u)\|_{X^s}=\|u\|_{H^s(\R^N)}$.
\begin{lemma}\label{333}
Under the assumptions of \autoref{112}, if $\{u_k\}$ is a critical point sequence of $I_\eta$ satisfying $u_k \rightarrow 0$ in $H$ as $k\rightarrow \infty$, then $u_k \rightarrow 0$ in $L^\infty(\R^N)$ as $k\rightarrow \infty$.
\end{lemma}

\begin{proof}
First, recall that $v$ is a weak solution of \eqref{30}, if $v$ satisfies the identity
\begin{equation}\label{31}
\frac{1}{\kappa_s}\int_{\R_+^{N+1}}y^{1-2s} \nabla v \nabla \varphi dxdy + \int_{\R^N} V(x)v(x,0)\varphi(x,0)dx=\int_{\R^N} f_\eta(x,v(x,0))\varphi(x,0)dx,\quad \forall \varphi\in X^s.
\end{equation}
Hereafter, we assume that $\kappa_s=1$ for simplicity. Let $u$ be a critical point of $I_\eta$ and $v$ its $s-$harmonic extension. For any $L>0$ we define
\begin{equation}\label{32}
v_L:=
\begin{cases}
v(x),\quad & \text{if } |v(x)|\leq L,\\
 L,\quad & \text{if } v(x)\geq L.
\end{cases}
\end{equation}
For $\beta\geq 0$, taking $\varphi=v_L^{2\beta}v \in X^s$ as a test function in \eqref{31}, one has
\begin{equation}\label{33}
\begin{split}
\int_{\R_+^{N+1}}y^{1-2s} v_L^{2\beta} |\nabla v|^2 dxdy +2\beta \int_{\{v\leq L\}}y^{1-2s} v_L^{2\beta} |\nabla v|^2 dxdy=&\int_{\R^N} f_\eta(x,v(x,0))v_L^{2\beta}(x,0)v(x,0)dx\\
&- \int_{\R^N} V(x)v^2(x,0)v_L^{2\beta}(x,0)dx.
\end{split}
\end{equation}
Since $V(x)\geq 0$, it follows from \eqref{11} and \eqref{33} that
\begin{equation}\label{34}
\begin{split}
\int_{\R_+^{N+1}}y^{1-2s} v_L^{2\beta} |\nabla v|^2 dxdy &\leq r\int_{\R^N}\xi(x) v^r(x,0) v_L^{2\beta}(x,0)dx\\
& \leq r \|\xi\|_{\frac{2}{2-r}} \|v^r(.,0) v_L^{2\beta}(.,0)\|_{\frac{2}{r}}.
\end{split}
\end{equation}
Let $\overline{v}_L=v_L^\beta v$, following \cite[Lemma 4.1]{Amb4}, we have
\begin{equation}\label{35}
\|\overline{v}_L(.,0)\|_{2_s^*}^2 \leq 4S(1+\beta)^2\int_{\R_+^{N+1}}y^{1-2s} v_L^{2\beta} |\nabla v|^2 dxdy.
\end{equation}
Putting together \eqref{34} and \eqref{35}, we conclude that
\begin{equation}\label{36}
\begin{split}
\|\overline{v}_L(.,0)\|_{2_s^*}^2 &\leq 4S(1+\beta)^2 r \|\xi\|_{\frac{2}{2-r}} \|v^r(.,0) v_L^{2\beta}(.,0)\|_{\frac{2}{r}}\\
& \leq c_0^2 (1+\beta)^2 \|v^r(.,0) v_L^{2\beta}(.,0)\|_{\frac{2}{r}},
\end{split}
\end{equation}
where $c_0^2=\max\{1, 4Sr \|\xi\|_{\frac{2}{2-r}}\}$. Passing to the limit in \eqref{36} as $L\rightarrow \infty$, Fatou's Lemma yields
\begin{equation}\label{37}
\|u\|_{(1+\beta)2_s^*}\leq [c_0(1+\beta)]^{\frac{1}{1+\beta}}\|u\|_{r^*}^{\frac{r+2\beta}{2(1+\beta)}},
\end{equation}
where $r^*=\frac{2(r+2\beta)}{r}$. Following \cite[Lemma 3.4]{Lin}, set $\beta_0=0$ and $(1+\beta_{n-1})2_s^*=\frac{2(r+2\beta_n)}{r}$ for all $n\in \N$, therefore, it is easy to see that
\begin{equation}\label{38}
\beta_n=\frac{(2_s^*-2)r}{2_s^*r-4}\left(\overline{r}^n -1\right), \forall n\in \N,
\end{equation}
where $\overline{r}=\frac{2_s^*}{4}r>1$ since $r>2-\frac{4s}{N}$. For each $n\in \N$, set
\begin{equation*}
\zeta_n =\sum_{i=0}^{n-1} \frac{\ln(c_0(\beta_i+1))}{\beta_i+1}\: \text{ and } \: \sigma_n=\prod_{i=0}^{n-1}\frac{r+2\beta_i}{2(1+\beta_i)}. 
\end{equation*}
Then, according to \cite{Lin}, both $\{\zeta_n\}$ and $\{\sigma_n\}$ are convergent sequences with $\lim\limits_{n\rightarrow \infty} \zeta_n = \zeta>0$ and $\lim\limits_{n\rightarrow \infty} \sigma_n = \sigma\in (0,1]$. By iterating \eqref{37} we obtain
\begin{equation}\label{39}
\|u\|_{(1+\beta_n)2_s^*}\leq e^{\zeta_n}\|u\|_{2}^{\sigma_n },\quad \forall n\in \N.
\end{equation}
Passing to the limit as $n\rightarrow \infty$ in \eqref{39}, we get
\begin{equation*}
\|u\|_{\infty}\leq e^{\zeta}\|u\|_{2}^{\sigma },
\end{equation*}
this together with \eqref{6} implies that if $\{u_k\}$ is a critical point sequence of $I_\eta$ satisfying $u_k \rightarrow 0$ in $H$ as $k\rightarrow \infty$, then $u_k \rightarrow 0$ in $L^\infty(\R^N)$ as $k\rightarrow \infty$.
\end{proof}

\begin{proof}[\textbf{Proof of Theorem 1.2}]
By $(F_1)$ and $(F_3)$, we get that $I_{\eta}$ is even and $I_{\eta}(0)=0$. On the other hand, by Lemmas \ref{331} and \ref{332} all the conditions of Lemma \ref{223} are satisfied, which implies that $I_{\eta}$ has a sequence of critical points $\{u_k\}$ converging to $0$ in $H$ as $k\rightarrow \infty$. Therefore, $\{u_k\}$ are solutions to the problem \eqref{9}. By Lemma \ref{333}, we know that $u_k \rightarrow 0$ in $L^\infty(\R^N)$ as $k\rightarrow \infty$. Hence, there exists $k_0\in \N$ such that $\|u_k\|_{\infty}\leq l$ for each $k\geq k_0$. Thus, we get infinitely many solutions of \eqref{1}. This completes the proof.
\end{proof}

\section*{Acknowledgments}
The author would like to express sincere thanks to the anonymous referee for the valuable comments and suggestions which improved the final version of the manuscript.

\end{document}